\newtheorem*{theorem*}{Theorem}
\newcommand{\lambdaVkrminleftpart}{\lambda_0(V_{k+r_{\min}}}
\newcommand{\lambdaVkrmaxrightpart}{\lambda_0(V_{k-r_{\max}}}
\newcommand{\hatakleftpart}{a_{k,1}}
\newcommand{\hatakrightpart}{a_{k,2}}
\newcommand{\widetildesinels}{\widetilde \cos_{k,r_{\min}}^{\text{l}}}
\newcommand{\widetildesiners}{\widetilde \cos_{k,r_{\max}}^{\text{r}}}
\newtheorem{theorem}{Theorem}
\newtheorem{lemma}[theorem]{Lemma}
\newtheorem{proposition}[theorem]{Proposition}
\newtheorem{remark}[theorem]{Remark}
\newtheorem{Corollary}[theorem]{Corollary}
\newcommand{\be}{\begin{equation}}
\newcommand{\ee}{\end{equation}}
\newcommand{\bea}{\begin{eqnarray*}}
	\newcommand{\eea}{\end{eqnarray*}}
\newcommand{\beq}{\begin{eqnarray}}
\newcommand{\eeq}{\end{eqnarray}}
\newtheorem{cor}[theorem]{Corollary}
\title[On the asymptotic behavior of the spectral gap ...]{On the asymptotic behavior of the spectral gap for discrete Schrödinger operators}
\subjclass[2010]{}
\keywords{}
\author[M.~Hofmann]{Matthias Hofmann}
\author[J.~Kerner]{Joachim Kerner}
\author[M.~Pechmann]{Maximilian Pechmann}
\address{Matthias Hofmann, Lehrgebiet Numerische Mathematik, Fakult\"at Mathematik und Informatik, Fern\-Universit\"at in Hagen, 58084 Hagen, Germany}
\email{matthias.hofmann@fernuni-hagen.de}
\address{Joachim Kerner, Lehrgebiet Analysis, Fakult\"at Mathematik und Informatik, Fern\-Universit\"at in Hagen, 58084 Hagen,Germany}
\email{joachim.kerner@fernuni-hagen.de}
\address{Maximilian Pechmann, Department of Mathematics, Tennessee Technological University, Cookeville, TN 38505, USA}
\email{mpechmann@tntech.edu}
\date{\today}
\thanks{
M. Hofmann was supported by the Funda\c{c}\~ao para a Ci\^encia e a Tecnologia (FCT), Portugal, within the scope of the projects Spectral Optimal Partitions: geometric and numerical analysis, reference \href{https://doi.org/10.54499/2023.13921.PEX         }{2023.13921.PEX}.
}
\begin{document}

	\begin{abstract} In this note we elaborate on the asymptotic behavior of the spectral gap of a class of discrete Schrödinger operators defined on a path graph in the limit of infinite volume. We confirm recent results and generalize them to a larger class of potentials using entirely different methods. Notably, we also resolve a conjecture previously proposed in this context. This then yields new insights into the rate at which the spectral gap tends to zero as the volume increases.
	\end{abstract}
	
\maketitle

\section{Introduction}

\noindent This paper is devoted to the investigation of the spectral gap of discrete Schrödinger operators. Here, the spectral gap refers to the difference between the lowest two eigenvalues. At least in the continuous setting, related considerations have a long tradition: for example, important results were obtained in \cite{AB} and then in \cite{Lavine}, solving the one-dimensional version of the fundamental gap conjecture~\cite{AndrewsClutterbuckFundamentalGap}. As a matter of fact, it turns out that it is in general quite difficult to say something about how the gap changes under perturbations, for instance, under the addition of some non-negative potential. Indeed, it might already be difficult to infer the correct sign of the changes~\cite{Abramovich}. This is certainly true as long as the configuration space on which the operator is defined remains fixed; for example, one could think of a Schrödinger operator defined on an interval of fixed length \cite{AK,ACH21}. However, as demonstrated recently~\cite{KernerTauferDomain,KernerTaufer}, one is able to say more about the spectral gap for a larger class of potentials if one is working with Schrödinger operators defined on domains of increasing volume. Most importantly, in \cite{KernerTaufer} the authors discussed an interesting effect which was recently studied on the path graph \cite{KY} and which forms -- in some sense -- the starting point of our investigations. More explicitly, it was shown that the presence of (certain) compactly supported potentials drastically change the asymptotic behavior of the spectral gap when compared to the asymptotic behavior of the gap of the Laplacian without a potential. In other words, as soon as one adds such a non-negative compactly supported potential to the Laplacian, the spectral gap converges strictly faster to zero in the limit of infinite volume. This somewhat surprising effect is the result of an effective degeneracy of the lowest two eigenvalues at infinite volume. It is important to note, however, that this effect might disappear in higher dimensions or reappear as well, depending on the particular nature of the potential~\cite{KernerTauferDomain}. This also explains why the authors of \cite{KY} focused on a specific discrete graph -- the path graph: adding a compactly supported potential to the Laplacian then leads, at least on an intuitive level, to the mentioned (approximate) degeneracy of the lowest two eigenvalues in the infinite-volume limit since the potential cuts the graph into two (approximately) congruent pieces.

Starting with the results of~\cite{KY}, we want to achieve two things with this paper: First, since the authors of \cite{KY} considered only a very specific potential localized on only one vertex, we generalize their main result \cite[Theorem~6]{KY} to general non-negative and compactly supported potentials, while using entirely different proof techniques. Furthermore, we prove the conjecture put forward in \cite[Section~4]{KY}, which provides more information on how fast the spectral gap converges to zero. By doing this, we also provide some rigorous justification to the numerical results obtained in \cite[Section~4]{KY}.

For completeness, let us also mention that investigations of the spectral gap (or spectral properties) for Jacobi operators \cite{Moerbeke,Teschl00} and Schrödinger-type operators on graph-like structures such as combinatorial graphs \cite{KellerLenzGraphs} and metric graphs \cite{BerkolaikoKuchmentBook,KurasovBook} are quite common. Regarding the discrete case, let us refer to \cite{FCLP18,LenzDirichlet,JostMulasMuench,BrSe24}; for the metric case, one may consult \cite{KM13,KKMM16,BandLevy,RohlederTree,BKKM17} and references therein.

Our paper is organized as follows: In Section~\ref{SecModel} we introduce the basic setting and the class of Schrödinger operators considered. In Section~\ref{SectionEstimates} we then derive upper and lower bounds on the two lowest eigenvalues. Finally, in Sections~\ref{SectionMainResultsI} and~\ref{SectionMainResultsII}, we use these estimates to prove our main results (Theorem~\ref{Theorem bounds accumulation points general potential} and Theorem~\ref{Theorem convergence}) and hereby resolve a conjecture from \cite[Section~4]{KY}, see in particular Corollary~\ref{C_1 proof conjecture special case J = 0} and Theorem~\ref{Theorem convergence}. In the Appendix~\ref{Appendix}, we derive auxiliary results that we use in Section~\ref{SectionMainResultsII}.
\section{The model}\label{SecModel}
	
\noindent Our configuration space is the path graph $G_k=(V_k,E_k)$ with vertex set $V_k:=\{-k, -k+1, \ldots, k\}$ and hence with an odd number of vertices $|V_k|= 2k+1$, $k \in \mathds{N}=\{1,2,...\}$. Except for the outer two vertices that have one neighbor each, it is readily clear that all other vertices have exactly two neighbors.  This implies that the edge set is given by $E_k=\{ \{v,w\}: v,w \in  V_k \ \text{with} \ |v-w|=1 \}$.
	
In the following, we study operators defined on the finite-dimensional Hilbert space $\mathcal{H}_k=\mathds{C}^{|V_k|}$, $k \in \mathds N$. The discrete (standard, unweighted) Laplacian $\mathcal{L}_{k}:\mathds{C}^{|V_k|} \rightarrow \mathds{C}^{|V_k|}$ is defined via 
	\begin{equation*}
	(\mathcal{L}_kf)(v):=\sum_{w \in V_k}\gamma_{w,v}(f(v)-f(w))\ , \quad v \in V_k\ , \quad f \in \mathcal{H}_k, \quad k \in \mathds N \ ,
	\end{equation*} 
where $\gamma_{v,w} = 1$ for all  $v,w \in V_k$ with $|v-w| = 1$ and $\gamma_{v,w} =0$ else. The associated quadratic form is given by
	\begin{equation*}\label{QForm}
	q_k[f]:=\frac{1}{2}\sum_{v,w \in V_k}\gamma_{w,v}|f(v)-f(w)|^2 =\frac{1}{2}\sum_{v,w \in V_k}\gamma_{w,v}|\nabla_{v,w}f|^2\ , \quad f \in \mathcal{H}_k\ ,
	\end{equation*}
	where we defined $\nabla_{v,w}f:=f(v)-f(w)$. Since the Laplacian $\mathcal{L}_k$ is a self-adjoint (non-negative) operator on a finite-dimensional Hilbert space, its spectrum consists of (non-negative, real) eigenvalues only. Furthermore, inserting the vector $\mathds 1 =(1,...,1)^T \in \mathcal H_k$ into $q_k[\cdot]$ shows that the lowest eigenvalue of $\mathcal{L}_k$, for all $k \in \mathds N$, is zero. 
	
	In a next step we construct a discrete Schrödinger operator by introducing a non-negative external potential with compact support. More explicitly, the quadratic form of our Schrödinger operator shall be given by
	\begin{equation}\label{QFormII}
	q_{k,\boldsymbol{\alpha}}[f]:=\frac{1}{2}\sum_{v,w \in V_k}\gamma_{w,v}|\nabla_{v,w}f|^2+\sum\limits_{j \in J} \alpha_j |f(j)|^2\ , \quad f \in \mathcal{H}_k \ ,
	\end{equation}
	where $\boldsymbol{\alpha} = (\alpha_j)_{j \in J}$ and $J \subset \mathds Z$ is a non-empty set independent of $k$; $\alpha_j > 0$, $j \in J$, represents the strength of the external potential localized at the $j$-th vertex. The associated self-adjoint operator then becomes
    \begin{equation}
        H_{k,\boldsymbol{\alpha}}:=\mathcal{L}_k + \sum\limits_{j \in J} \alpha_j \delta_j \ , \quad k \in \mathds N \ ,
    \end{equation} 
    where $\delta_j$ is such that $(\delta_j f)_{i}:=\delta_{ij}f(j)$, $i \in V_k$, $\delta_{ij}$ referring to the Kronecker delta. As in the case for $\mathcal L_k$, the operator $H_{k,\boldsymbol{\alpha}}$ is, for all $k \in \mathds N$, non-negative and its spectrum consists of non-negative eigenvalues only. For all $k \in \mathds N$, we denote the eigenvalues of $H_{k,\boldsymbol{\alpha}}$ by
    \begin{equation*}
    \lambda_0(V_k, \boldsymbol \alpha) < \lambda_1(V_k, \boldsymbol \alpha) \leq ... \leq \lambda_{|V_k|-1}(V_k, \boldsymbol \alpha) \ . 
    \end{equation*}
	We can now introduce the central object of interest in this paper, which is the spectral gap
	\begin{equation} \label{definition sequence of interest}
	\Gamma(V_k, \boldsymbol \alpha):=\lambda_1(V_k, \boldsymbol \alpha)-\lambda_0(V_k, \boldsymbol \alpha)\ .
	\end{equation}
    Note that $\Gamma(V_k, \boldsymbol \alpha) > 0$ for each value of $k \in \mathds{N}$, since the ground state is non-degenerate; more on the spectral theory of graphs can be found in~\cite{Chung,Brouwer,KellerLenzGraphs}. We denote by $\varphi_{k, \boldsymbol{\alpha}}$ the normalized ground state of $H_{k,\boldsymbol{\alpha}}$, that is, the eigenvector corresponding to the lowest eigenvalue $\lambda_0(V_k, \boldsymbol \alpha)$ with $\|\varphi_{k, \boldsymbol{\alpha}}\|^2_{\mathcal{H}_k}=\sum_{j=-k}^k |\varphi_{k, \boldsymbol{\alpha}}(j)|^2 = 1$. Note that the ground state $\varphi_{k, \boldsymbol{\alpha}}$ can be chosen to be strictly positive.
    
    We set
        \begin{equation*}
            \widetilde \alpha \coloneq \min\limits_{j \in J} \alpha_j \qquad \text{ and } \qquad \widehat \alpha \coloneq \sum\limits_{j \in J} \alpha_j
        \end{equation*}
and frequently use the identity, $k \in \mathds N$,
\begin{equation*}
    \frac{1}{2}\sum_{v,w \in V_k}\gamma_{w,v}|\nabla_{v,w}f|^2=\sum_{j=-k}^{k-1}|\nabla_{j}f|^2=\sum_{j=-k}^{k}|\nabla_{j}f|^2
\end{equation*}
where $\nabla_{j}f:=f(j+1)-f(j)$ and where we set $f(k+1):=f(k)$.

For the special case $J = \{0\}$ with external potential $\alpha \delta_0$, $\alpha >0$, we write
\begin{align*}
H_{k,\alpha} \coloneq \mathcal L_k + \alpha \delta_0 \ , \quad k \in \mathds N \ ,
\end{align*}
for the Hamiltonian, $\varphi_{k, \alpha}$ for its ground state, and $\Gamma(V_k, \alpha)$ for the spectral gap.
It is important to mention that in this case, one has 
\begin{equation} \label{lambda1alphadelta0 equation}
    \lambda_1(V_k,\alpha \delta_0)  = \lambda_0(V_k,\infty \delta_0)
\end{equation}
for all $\alpha \ge 0$ and all $k \in \mathds N$. Here, since the external potential is $\alpha\delta_0$, we write $\lambda_1(V_k,\alpha \delta_0)$ for $\lambda_1(V_k, \boldsymbol \alpha)$; similarly, $\lambda_0(V_k,\infty \delta_0)$ is shorthand for the lowest eigenvalue of the Laplacian on the path graph with a Dirichlet condition at the zero vertex. In the same spirit, we use $\lambda_0(V_k, \alpha \delta_0)$ as shorthand for $\lambda_0(V_k, \boldsymbol \alpha)$ in case of an external potential $\alpha\delta_0$. Later we shall also use that, see for example \cite{Brouwer},
\begin{equation} \label{Gleichung lambda0}
    \lambda_0(V_k,\infty \delta_0) = 2- 2\cos\left(\pi/|V_k|\right) \ , \quad k \in \mathds N \ ,
\end{equation}
thus, in particular, 
%
\begin{align} \label{obere Schranke lambda0inftydelta0}
    \lambda_0(V_k,\infty \delta_0) =\pi^2|V_k|^{-2} + \mathcal O(|V_k|^{-4}) \ .
\end{align}
%

%

We conclude this section with two remarks. The first is intended to clarify our notation used in this paper, while the second presents a physical intuition that motivated and guided our analysis.

    \begin{remark}
       For two sequences $(a_k)_{k \in \mathds N}$, $(b_k)_{k \in \mathds N}$, $a_k \neq 0$ for all but finitely many $k \in \mathds N$, we use the notation $b_k \lesssim a_k$ iff there is a constant $c> 0$ such that $|b_k/a_k| \le c$ for all but finitely many $k \in \mathds N$, and $b_k \sim a_k$ iff there are constants $c,C >0$ such that $c \le |b_k/a_k| \le C$ for all but finitely many $k \in \mathds N$.

        Also, in the proofs and some statements we denote by $(const.)$ positive constants that are independent of $k$ and the potential strength $\boldsymbol{\alpha} = (\alpha_j)_{j \in J}$, whenever their precise values are irrelevant. Note, however, that they could still depend on the support of the potential.
    \end{remark}
\begin{remark}\label{RemarkPechmannPhysik} We encourage the reader to keep the following physical intuition in mind, which also guided the development of this work. Our analysis focuses on the two lowest eigenvalues, both of which converge to zero as the size of the graph tends to infinity. In this low-energy regime, a particle occupying a quantum state corresponding to either eigenvalue has arbitrarily small kinetic energy. Consequently, any potential of fixed finite strength, informally, becomes effectively impenetrable to such a particle, and therefore behaves, in the limit, as if it were infinitely strong. This perspective helps to explain the appearance of quantities related to infinitely strong potentials in our results.
\end{remark}
\section{Spectral estimates}\label{SectionEstimates}
\noindent In this section we provide upper and lower bounds for $\lambda_0(V_k, \boldsymbol \alpha)$ as well as for $\lambda_1(V_k, \boldsymbol \alpha)$. We start with a lower bound for $\lambda_0(V_k, \boldsymbol \alpha)$. We set
\begin{align*}
r_{\min} \coloneq \min \{j \in J\} \qquad &\text{ and } \qquad r_{\max} \coloneq \max \{j \in J\} \ ,
\end{align*}
and introduce
\begin{align}
    \hatakleftpart &\coloneq \dfrac{1}{2} - \sum_{j=-k}^{r_{\min}} |\varphi_{k, \boldsymbol{\alpha}}(j) - \varphi_{k, \boldsymbol{\alpha}}(r_{\min})|^2 \label{definition ak1}\ ,\\
\shortintertext{as well as}
\hatakrightpart &\coloneq \dfrac{1}{2} - \sum_{j=r_{\max}}^{k} |\varphi_{k, \boldsymbol{\alpha}}(j) - \varphi_{k, \boldsymbol{\alpha}}(r_{\max})|^2 \label{definition ak2} \ .
\end{align}
Note that $1 \ge \hatakleftpart + \hatakrightpart \ge 1 - \sum_{j=-k}^k |\varphi_{k, \boldsymbol{\alpha}}(j)|^2 = 0$ due to monotonicity of the ground state outside of $[r_{\min}, r_{\max}]$.
\begin{theorem}[Lower bound ground state energy] \label{Theorem lower bound}
Consider a Hamiltonian $H_{k,\boldsymbol{\alpha}}$ with ground state energy $\lambda_0(V_k, \boldsymbol \alpha) $. Then, for all $k > \max\{|r_{\min}|, |r_{\max}|\}/2$, we have
\begin{equation}\label{LowerBoundEigenvalue}
\begin{split}
\lambda_0(V_{k}, \boldsymbol \alpha) 
& \ge \left( \dfrac{1}{2} - \hatakleftpart \right) \lambdaVkrminleftpart, \infty \delta_0)  + \left( \dfrac{1}{2} - \hatakrightpart \right) \lambdaVkrmaxrightpart, \infty \delta_0) \\
&\qquad + \sum\limits_{j \in J} \alpha_j | \varphi_{k, \boldsymbol{\alpha}}(j)|^2 
\ .
\end{split}
\end{equation}
Moreover,
\begin{equation}\label{UpperBoundAKs}
0 \leq \hatakleftpart + \hatakrightpart \le (const.) \widetilde \alpha^{-1} k^{-1}
\end{equation}
for all but finitely many $k \in \mathds N$.
\end{theorem}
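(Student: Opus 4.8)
The plan is to handle \eqref{LowerBoundEigenvalue} and \eqref{UpperBoundAKs} by separate arguments. For the lower bound I would start from $\lambda_0(V_k,\boldsymbol\alpha)=q_{k,\boldsymbol\alpha}[\varphi_k]=\sum_{j=-k}^{k-1}|\nabla_j\varphi_k|^2+\sum_{j\in J}\alpha_j|\varphi_k(j)|^2$ and discard the non-negative potential term together with the increments $\nabla_j\varphi_k$ for $r_{\min}\le j\le r_{\max}-1$, leaving $\lambda_0(V_k,\boldsymbol\alpha)\ge\sum_{j=-k}^{r_{\min}-1}|\nabla_j\varphi_k|^2+\sum_{j=r_{\max}}^{k-1}|\nabla_j\varphi_k|^2$. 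On the left outer piece $\{-k,\dots,r_{\min}\}$ the shifted function $\psi(j):=\varphi_k(j)-\varphi_k(r_{\min})$ has the same increments and satisfies $\psi(r_{\min})=0$, so $\sum_{j=-k}^{r_{\min}-1}|\nabla_j\varphi_k|^2$ is exactly the Dirichlet energy on that piece with a Dirichlet vertex at $r_{\min}$ and a free end at $-k$. Its lowest eigenvalue equals $\lambda_0(V_{k+r_{\min}},\infty\delta_0)$, since imposing a Dirichlet condition at the central vertex of $V_{k+r_{\min}}$ decouples it, by symmetry, into two copies of precisely this one-sided problem on $k+r_{\min}$ free vertices. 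The variational principle then gives $\sum_{j=-k}^{r_{\min}-1}|\nabla_j\varphi_k|^2\ge\lambda_0(V_{k+r_{\min}},\infty\delta_0)\sum_{j=-k}^{r_{\min}}|\psi(j)|^2=\lambda_0(V_{k+r_{\min}},\infty\delta_0)\,(\tfrac12-a_{k,1})$, the last equality being the definition of $a_{k,1}$. Treating the right piece symmetrically and adding yields \eqref{LowerBoundEigenvalue}; the hypothesis on $k$ is what makes both outer pieces nonempty, so that the two Dirichlet eigenvalues are defined.

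For \eqref{UpperBoundAKs} I would first rewrite $a_{k,1}+a_{k,2}$ algebraically. Expanding the squares in the definitions and abbreviating $c_1:=\varphi_k(r_{\min})$, $c_2:=\varphi_k(r_{\max})$, the left/right vertex counts by $N_1,N_2$, and the $\ell^1$-sums $\Sigma_1:=\sum_{j=-k}^{r_{\min}}\varphi_k(j)$, $\Sigma_2:=\sum_{j=r_{\max}}^{k}\varphi_k(j)$, one obtains (with the obvious modification when $r_{\min}=r_{\max}$) the identity $a_{k,1}+a_{k,2}=p_0+c_1(2\Sigma_1-N_1c_1)+c_2(2\Sigma_2-N_2c_2)$, where $p_0:=\sum_{r_{\min}<j<r_{\max}}|\varphi_k(j)|^2$. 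Since $\varphi_k$ is monotone outside $[r_{\min},r_{\max}]$ one has $\varphi_k(j)\ge c_1\ge0$ on the left piece, so $2\Sigma_1\ge N_1c_1$ and all three summands are non-negative with $c_i(2\Sigma_i-N_ic_i)\le2c_i\Sigma_i$. As $\|\varphi_k\|=1$, Cauchy--Schwarz gives $\Sigma_i\le\sqrt{N_i}\lesssim k^{1/2}$. It therefore suffices to prove $c_1,c_2\lesssim\widetilde\alpha^{-1}k^{-3/2}$ and $p_0\lesssim\widetilde\alpha^{-1}k^{-1}$.

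The heart of the matter is the bound on the values of $\varphi_k$ on the support. First, testing $q_{k,\boldsymbol\alpha}$ against the Dirichlet ground state of one outer piece, extended by zero (so that it vanishes on $J$), gives $\lambda_0(V_k,\boldsymbol\alpha)\le\lambda_0(V_{k+r_{\min}},\infty\delta_0)\lesssim k^{-2}$ via \eqref{obere Schranke lambda0inftydelta0}. Next, summing the eigenvalue equation $H_{k,\boldsymbol\alpha}\varphi_k=\lambda_0(V_k,\boldsymbol\alpha)\varphi_k$ over all vertices and using $\mathcal{L}_k\mathds 1=0$ produces the exact identity $\sum_{j\in J}\alpha_j\varphi_k(j)=\lambda_0(V_k,\boldsymbol\alpha)\sum_{v\in V_k}\varphi_k(v)$. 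Its right-hand side is $\le\lambda_0(V_k,\boldsymbol\alpha)\sqrt{2k+1}\lesssim k^{-3/2}$, while its left-hand side is $\ge\widetilde\alpha\sum_{j\in J}\varphi_k(j)$ (all terms being positive); hence $\sum_{j\in J}\varphi_k(j)\lesssim\widetilde\alpha^{-1}k^{-3/2}$, and in particular $c_1,c_2\lesssim\widetilde\alpha^{-1}k^{-3/2}$, so the cross terms are $\lesssim\widetilde\alpha^{-1}k^{-1}$. For $p_0$ I would use that $[r_{\min},r_{\max}]$ has fixed length: writing $\varphi_k(j)=c_1+\sum_{u=r_{\min}}^{j-1}\nabla_u\varphi_k$ and bounding each increment by $(\sum_j|\nabla_j\varphi_k|^2)^{1/2}\le\lambda_0(V_k,\boldsymbol\alpha)^{1/2}\lesssim k^{-1}$ shows $\max_{[r_{\min},r_{\max}]}\varphi_k\lesssim\widetilde\alpha^{-1}k^{-3/2}+k^{-1}$, whence $p_0\lesssim k^{-2}$, which is $\lesssim\widetilde\alpha^{-1}k^{-1}$ for all but finitely many $k$.

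The main obstacle is the sharp exponent $k^{-3/2}$ for $\varphi_k$ on $J$. The naive energy estimate $\alpha_j|\varphi_k(j)|^2\le\lambda_0(V_k,\boldsymbol\alpha)\lesssim k^{-2}$ only yields the pointwise bound $\varphi_k(j)\lesssim\widetilde\alpha^{-1/2}k^{-1}$, which when fed into $2c_1\Sigma_1$ produces the too-weak $\widetilde\alpha^{-1/2}k^{-1/2}$; the extra factor $k^{-1/2}$ is recovered only through the global identity $\sum_{j\in J}\alpha_j\varphi_k(j)=\lambda_0(V_k,\boldsymbol\alpha)\sum_v\varphi_k(v)$, i.e.\ by exploiting the $\ell^1$-smallness of $\varphi_k$ on $J$ rather than its pointwise size. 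A secondary technical point is that the implied constants must be uniform in $\boldsymbol\alpha$, so the $\widetilde\alpha$-dependence has to be carried through honestly and the lower-order remainders (such as the $k^{-2}$ bound for $p_0$) absorbed into $\widetilde\alpha^{-1}k^{-1}$ only after $k$ passes a potential-dependent threshold, in accordance with the ``for all but finitely many $k$'' convention.
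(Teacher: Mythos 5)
Your proposal is correct. For \eqref{LowerBoundEigenvalue} it is essentially the paper's own argument: drop the potential term and the increments inside $[r_{\min},r_{\max}]$, note that the shifted functions $\varphi_k-\varphi_k(r_{\min})$ and $\varphi_k-\varphi_k(r_{\max})$ satisfy a Dirichlet condition at $r_{\min}$, resp.\ $r_{\max}$, and apply the variational principle for the one-sided Dirichlet problem, whose lowest eigenvalue is $\lambda_0(V_{k+r_{\min}},\infty\delta_0)$, resp.\ $\lambda_0(V_{k-r_{\max}},\infty\delta_0)$. For \eqref{UpperBoundAKs}, however, you take a genuinely different route, and it works. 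Both you and the paper expand $a_{k,1}+a_{k,2}$ into the same algebraic identity (the paper's \eqref{equality for ak}, up to regrouping) and handle the outer sums by Cauchy--Schwarz; the difference is how $\varphi_k$ is controlled on $J$, which you correctly identify as the heart of the matter. The paper stays at the level of quadratic forms: sandwiching $\lambda_0(V_k,\boldsymbol\alpha)$ between the lower bound \eqref{LowerBoundEigenvalue} with the potential term retained and the trial-state upper bound $2\lambda_0(V_k,\boldsymbol\alpha)\le\lambda_0(V_{k+r_{\min}},\infty\delta_0)+\lambda_0(V_{k-r_{\max}},\infty\delta_0)$ yields only the self-referential estimate \eqref{inequality asdsad23123lxc}, i.e.\ $|\varphi_k(j)|^2\lesssim\widetilde\alpha^{-1}(a_{k,1}+a_{k,2})k^{-2}$ on $J$, so the paper must close a bootstrap: inserting this back into \eqref{equality for ak} gives an inequality of the form $X\le C\max\{\widetilde\alpha^{-1/2}X^{1/2}k^{-1/2},\dots\}$ for $X=a_{k,1}+a_{k,2}$, which is then solved for $X$ (this is \eqref{upper bound hatakleftpart plus hatakrightpart general case}). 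You bypass the bootstrap entirely by summing the eigenvalue equation against the constant vector: since $\mathcal L_k$ is self-adjoint with $\mathcal L_k\mathds 1=0$, one has the exact identity $\sum_{j\in J}\alpha_j\varphi_k(j)=\lambda_0(V_k,\boldsymbol\alpha)\sum_{v}\varphi_k(v)$, and strict positivity of $\varphi_k$ together with $\lambda_0(V_k,\boldsymbol\alpha)\lesssim k^{-2}$ gives the non-self-referential pointwise bound $\varphi_k(j)\lesssim\widetilde\alpha^{-1}k^{-3/2}$ on $J$ in one stroke. What this buys: a linear argument with no fixed-point step, and the estimate of Remark~\ref{Remark upper bound phik(0)} --- which the paper states only for $J=\{0\}$ and derives as a \emph{corollary} of the theorem --- becomes an \emph{input}, proved for arbitrary compactly supported $J$; the price is that you need the eigenvalue equation and the positivity of the ground state, whereas the paper's main estimate needs only form comparisons. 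Finally, your bookkeeping (absorbing $p_0\lesssim\widetilde\alpha^{-2}k^{-3}+k^{-2}$ and similar remainders into $\widetilde\alpha^{-1}k^{-1}$ only once $k$ exceeds an $\boldsymbol\alpha$-dependent threshold) is exactly the convention the paper itself uses in passing from \eqref{upper bound hatakleftpart plus hatakrightpart general case} to \eqref{UpperBoundAKs}, so no uniformity is lost relative to the published statement.
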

\begin{proof} In a first step we obtain, using the minmax-principle,
    \begin{align*}
            \lambda_0(V_{k}, \boldsymbol \alpha) 
            &\ge \sum\limits_{j=-k}^{r_{\min}-1} |\nabla_j (\varphi_{k, \boldsymbol{\alpha}} - \varphi_{k, \boldsymbol{\alpha}}(r_{\min}))|^2 + \sum\limits_{j=r_{\max}}^{k} |\nabla_j ( \varphi_{k, \boldsymbol{\alpha}} - \varphi_{k, \boldsymbol{\alpha}}(r_{\max}))|^2\\
            &\qquad + \sum\limits_{j \in J} \alpha_j | \varphi_{k, \boldsymbol{\alpha}}(j)|^2 \\
            &\ge \sum\limits_{j=-k}^{r_{\min}} |\varphi_{k, \boldsymbol{\alpha}}(j) - \varphi_{k, \boldsymbol{\alpha}}(r_{\min})|^2  \lambdaVkrminleftpart, \infty \delta_0) \\
            & \qquad + \sum\limits_{j=r_{\max}}^k |\varphi_{k, \boldsymbol{\alpha}}(j) - \varphi_{k, \boldsymbol{\alpha}}(r_{\max})|^2 \lambdaVkrmaxrightpart, \infty \delta_0) + \sum\limits_{j \in J} \alpha_j | \varphi_{k, \boldsymbol{\alpha}}(j)|^2 \ .
        \end{align*}
        Using the definition of $a_{k,1}, a_{k,2}$ then implies the lower bound~\eqref{LowerBoundEigenvalue}.
        
        We now proceed to prove \eqref{UpperBoundAKs}: Since $2\lambda_0(V_{k}, \boldsymbol \alpha) \le \lambdaVkrminleftpart,\infty \delta_0) + \lambdaVkrmaxrightpart,\infty \delta_0)$,
we conclude
\begin{align} \label{inequality asdsad23123lxc}
    \sum\limits_{j \in J} \alpha_j | \varphi_{k, \boldsymbol{\alpha}}(j)|^2 \le \hatakleftpart\lambdaVkrminleftpart, \infty \delta_0) + \hatakrightpart\lambdaVkrmaxrightpart, \infty \delta_0)\ ,
\end{align}
and
\begin{align} \label{inequality widetilde alpha lkj2kl3j}
& \sum\limits_{j \in J} |\varphi_{k, \boldsymbol{\alpha}}(j)|^2 \le \widetilde \alpha^{-1} \Big( \hatakleftpart\lambdaVkrminleftpart, \infty \delta_0) + \hatakrightpart\lambdaVkrmaxrightpart, \infty \delta_0) \Big) \ .
\end{align}
Next, we observe that
\begin{align*}
1 = \sum\limits_{j=-k}^{k} |\varphi_{k, \boldsymbol{\alpha}}(j)|^2 = \sum\limits_{j=-k}^{r_{\min}-1} |\varphi_{k, \boldsymbol{\alpha}}(j)|^2 + \sum\limits_{j=r_{\max}+1}^{k} |\varphi_{k, \boldsymbol{\alpha}}(j)|^2 + \sum\limits_{j=r_{\min}}^{r_{\max}} |\varphi_{k, \boldsymbol{\alpha}}(j)|^2 
\end{align*}
and therefore
\begin{align} \label{equality for ak}
\begin{split}
\hatakleftpart + \hatakrightpart 
& = \sum\limits_{j=r_{\min}}^{r_{\max}} |\varphi_{k, \boldsymbol{\alpha}}(j)|^2  + 2 \varphi_{k, \boldsymbol{\alpha}}(r_{\min}) \sum\limits_{j=-k}^{r_{\min}-1} \varphi_{k, \boldsymbol{\alpha}}(j) + 2 \varphi_{k, \boldsymbol{\alpha}}(r_{\max}) \sum\limits_{j=r_{\max}+1}^k \varphi_{k, \boldsymbol{\alpha}}(j) \\
& \qquad \qquad  - |\varphi_{k, \boldsymbol{\alpha}}(r_{\min})|^2 (k+r_{\min}) - |\varphi_{k, \boldsymbol{\alpha}}(r_{\max})|^2 (k-r_{\max}) \ .
\end{split}
\end{align}
To simplify we set $\Theta_{\max}:=\max\big\{ \lambdaVkrminleftpart, \infty \delta_0), \lambdaVkrmaxrightpart, \infty \delta_0) \big\}$. Then, using \eqref{inequality widetilde alpha lkj2kl3j} and Hölder's inequality as well as $a+b \leq \max\{2a,2b\}$ for $a,b > 0$, we get 
\begin{align*}
\hatakleftpart + \hatakrightpart 
& \le (const.) \max \Big\{ \widetilde \alpha^{-1/2} \left(\hatakleftpart + \hatakrightpart \right)^{1/2} \Theta_{\max}^{1/2} k^{1/2}, \sum\limits_{j=r_{\min}}^{r_{\max}} |\varphi_{k, \boldsymbol{\alpha}}(j)|^2\Big\} \ .
\end{align*}
In order to estimate the last sum, we use that for all $j \in\{r_{\min}, \ldots, r_{\max}\}$, since $(a+b)^2 \leq 2a^2 + 2b^2$ for $a,b \in \mathds R$ and due to the eigenvalue equation on the form level, 
\begin{equation}
\begin{aligned}
   |\varphi_{k, \boldsymbol{\alpha}}(j)|^2 &\le \left (|\varphi_{k, \boldsymbol{\alpha}}(r_{\min})| + \sum_{i=r_{\min}}^{j-1} |\varphi_{k, \boldsymbol{\alpha}}(i+1) - \varphi_{k, \boldsymbol{\alpha}}(i)| \right )^2\\
    &\le  2\max_{j\in J} |\varphi_{k, \boldsymbol{\alpha}}(j)|^2 + 2 |r_{\max}-r_{\min}| \lambda_0(V_k, \boldsymbol \alpha)\ .
\end{aligned}
\end{equation}
Therefore, using again the inequalities $a+b \leq \max\{2a,2b\}$ for $a,b > 0$ and \eqref{inequality widetilde alpha lkj2kl3j}, we get
\begin{equation} \label{upper bound hatakleftpart plus hatakrightpart general case}\begin{split}
\hatakleftpart + \hatakrightpart & \le (const.) \max\Big\{ \widetilde \alpha^{-1} \Theta_{\max} k, \Theta_{\max} \Big\}
\end{split}
\end{equation}
for all but finitely many $k \in \mathds N$. The claimed estimate~\eqref{UpperBoundAKs} now follows by observing that $\Theta_{\max} \sim k^{-2}$. 




%
%
\end{proof}

\begin{remark} \label{Remark upper bound phik(0)}
Inspecting the proof of Theorem~\ref{Theorem lower bound}, in particular \eqref{inequality asdsad23123lxc}, we obtain the following estimate for the special case when $J = \{0\}$, that is, for $H_{k,\alpha} = \mathcal L_k + \alpha \delta_0$ with $\alpha > 0$: For all but finitely many $k \in \mathds N$,
\begin{equation} \label{upper bound phikalpha}
    \varphi_{k, \alpha}(0) \leq (const.) \alpha^{-1} k^{-3/2} \ .
\end{equation}
This can be regarded as a generalization of~\cite[Lemma~5]{KY}. Note here that $\varphi_{k, 0}(0)= |V_k|^{-1/2}$ (that is for $\alpha=0$).

From a physics point of view, it is interesting to remark that the just mentioned estimate implies an upper bound on the potential energy of a particle described by $H_{k,\alpha} = \mathcal L_k + \alpha \delta_0$ with $\alpha > 0$: More explicitly, the potential energy in the ground state is given by $E_{pot}(\alpha):=\alpha |\varphi_{k, \alpha}(0)|^2$ and we therefore obtain, for $k \in \mathds{N}$,
$$E_{pot}(\alpha) \leq (const.) \alpha^{-1} k^{-3} \ .$$
The interesting point here is that the potential energy is of a lower order than the kinetic energy, which is of order $k^{-2}$. 
\end{remark}

In a next statement we prove an upper bound for the lowest eigenvalue $\lambda_0(V_k, \boldsymbol \alpha)$. In order to formulate it, we introduce the two functions
\begin{equation}\label{Definition cossinus tilde functionen}
\begin{split}
\widetildesinels(j) & \coloneq \begin{cases}A_{\min}^{-1/2}\cos \left(\left(j+k+\frac{1}{2}\right) \pi/(2(k+r_{\min})+1)\right), \ -k\leq j \leq r_{\min}\ , \\
0\ \quad \text{else}\ ,
\end{cases} \\
\widetildesiners(j) &\coloneq \begin{cases} A_{\max}^{-1/2}\cos \left(\left(k-j+\frac{1}{2}\right) \pi/(2(k-r_{\max})+1)\right), \ r_{\max}\leq j \leq k\ ,\\
0\ \quad \text{else}\ .
\end{cases}
\end{split}
\end{equation}
Here, $A_{\max},A_{\min} > 0$ are such that both functions are normalized to $1/2$. Also note that both functions are constructed using the well-known explicit form of the second eigenfunction to the Hamiltonian $H_{k,\boldsymbol{\alpha}} = \mathcal L_k + \alpha\delta_0$. In particular, $\varphi_{k,\infty} = \widetilde \cos_{k,0}^{\text{l}} + \widetilde \cos_{k,0}^{\text{r}}$ where $\varphi_{k,\infty}$ denotes the ground state of $H_{k,\alpha}$ for $\alpha=\infty$.
\begin{theorem}[Upper bound ground state energy I] \label{Theorem upper boundII}
Consider a Hamiltonian $H_{k,\boldsymbol{\alpha}}$ with ground state $\varphi_{k, \boldsymbol{\alpha}} \in \mathcal{H}_k$ corresponding to the ground-state energy $\lambda_0(V_k, \boldsymbol \alpha) $. Then
\begin{align} \label{inequality upper bound ground state energy}
    \lambda_0(V_k, \boldsymbol \alpha) \le \dfrac{1}{2}(1-b_k)  \Big( \lambdaVkrminleftpart,\infty \delta_0) + \lambdaVkrmaxrightpart,\infty \delta_0) \Big) + b_k c_k^2 \ ,
\end{align}
where $c_k^2 := \lambda_0(V_k,\infty\delta_0)/ (2+\epsilon)$ for an arbitrary $\epsilon > 0$ and with the sequence $(b_k)_{k \in \mathds N}$, $0 < b_k \le 1$ for all $k \in \mathds N$, such that 
\begin{equation*}
 \psi_k(j):= \left( 1- b_k \right)^{1/2} \left( \widetildesinels(j) + \widetildesiners(j) \right) + \widehat \alpha^{-1/2} b^{1/2}_kc_k \ , \quad j \in V_k \ , \quad k \in \mathds N \ ,
\end{equation*}
is normalized. Furthermore,
\begin{align} \label{inequality lower bound bk}
b_k \ge (const.) \widehat \alpha^{-1} k^{-1}
\end{align}
for all but finitely many $k \in \mathds N$.
\end{theorem}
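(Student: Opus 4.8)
The plan is to apply the Rayleigh--Ritz variational principle with the explicit normalized trial state $\psi_k$, so that $\lambda_0(V_k,\boldsymbol\alpha)\le q_{k,\boldsymbol\alpha}[\psi_k]$. Everything rests on three structural features of the two cosine bumps $\widetildesinels$ and $\widetildesiners$, which I would establish first. Namely: (i) each is, up to its normalization constant, the Dirichlet--Neumann ground state of the Laplacian on the left segment $\{-k,\dots,r_{\min}\}$ (Dirichlet at $r_{\min}$, free at $-k$) resp. on the right segment $\{r_{\max},\dots,k\}$, so that its Rayleigh quotient in $G_k$ equals $\lambda_0(V_{k+r_{\min}},\infty\delta_0)$ resp. $\lambda_0(V_{k-r_{\max}},\infty\delta_0)$; this is a direct substitution using $\cos([j+k+1/2]\pi/(2(k+r_{\min})+1))=\sin((r_{\min}-j)\pi/(2(k+r_{\min})+1))$ together with $\lambda_0(V_m,\infty\delta_0)=2(1-\cos(\pi/(2m+1)))$, noting that the single edge leaving each support contributes zero gradient since both bumps vanish at the Dirichlet vertex. (ii) The two bumps have essentially disjoint supports --- they overlap at most at the lone Dirichlet vertex when $r_{\min}=r_{\max}$, where both vanish --- so that $\|\widetildesinels+\widetildesiners\|^2=\tfrac12+\tfrac12=1$ and the kinetic energies add. (iii) Both bumps vanish at every potential site $j\in J\subseteq[r_{\min},r_{\max}]$: at interior sites because these lie outside both supports, and at the endpoints because of the Dirichlet condition.

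Given these, I would compute $q_{k,\boldsymbol\alpha}[\psi_k]$ term by term. Writing $\psi_k=(1-b_k)^{1/2}u_k+d_k$ with $u_k:=\widetildesinels+\widetildesiners$ and the constant $d_k:=\widehat\alpha^{-1/2}b_k^{1/2}c_k$, the offset $d_k$ has vanishing discrete gradient, so by (i)--(ii) the kinetic part equals $(1-b_k)\cdot\tfrac12(\lambda_0(V_{k+r_{\min}},\infty\delta_0)+\lambda_0(V_{k-r_{\max}},\infty\delta_0))$. By (iii) we have $\psi_k(j)=d_k$ for all $j\in J$, so the potential part is $\sum_{j\in J}\alpha_j d_k^2=\widehat\alpha\,d_k^2=b_k c_k^2$. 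Adding the two reproduces exactly the right-hand side of \eqref{inequality upper bound ground state energy}, and since $\psi_k$ is normalized the variational principle finishes this part.

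It remains to produce the normalizing $b_k$ and to bound it below. I would expand $\|\psi_k\|^2$ as a function of $b\in[0,1]$: using $\|u_k\|^2=1$ it equals $(1-b)+2P\sqrt{b(1-b)}+Qb$, with $P:=\widehat\alpha^{-1/2}c_k\langle u_k,\mathds 1\rangle$ and $Q:=\widehat\alpha^{-1}c_k^2(2k+1)$. Setting this equal to $1$, dividing by $\sqrt b$ and squaring yields the explicit root $b_k=4P^2/((1-Q)^2+4P^2)$, which lies in $(0,1)$ precisely when $Q<1$; since $c_k^2\sim k^{-2}$ gives $Q\sim\widehat\alpha^{-1}k^{-1}\to0$, this holds for all but finitely many $k$, and $P>0$ because $u_k\ge0$ is not identically zero. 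For \eqref{inequality lower bound bk} one has $(1-Q)^2+4P^2\le 2$ for large $k$, whence $b_k\ge 2P^2=2\widehat\alpha^{-1}c_k^2\langle u_k,\mathds 1\rangle^2$.

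The main obstacle --- and the only genuinely computational step --- is the two-sided control of the quantities feeding this last estimate: $c_k^2\sim k^{-2}$ is immediate from \eqref{obere Schranke lambda0inftydelta0}, but the bound $\langle u_k,\mathds 1\rangle\gtrsim k^{1/2}$ is more delicate. I would obtain it by writing $\langle\widetildesinels,\mathds 1\rangle=(\sum_j\cos\theta_j)/\sqrt{2\sum_j\cos^2\theta_j}$ with $\theta_j$ equispaced in $[0,\pi/2]$, and estimating both trigonometric sums by their Riemann-sum values of order $k$ (so $A_{\min},A_{\max}\sim k$ while the numerator $\sim k$, combining to $\sim k^{1/2}$); a closed-form evaluation of $\sum\cos((i+\tfrac12)\theta)$ makes this rigorous. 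Feeding $c_k^2\langle u_k,\mathds 1\rangle^2\gtrsim k^{-2}\cdot k=k^{-1}$ into $b_k\ge 2P^2$ then gives $b_k\ge c_1\widehat\alpha^{-1}k^{-1}$ with $c_1$ depending only on the fixed data $r_{\min},r_{\max}$ and on the universal constants above, hence independent of $\boldsymbol\alpha$.
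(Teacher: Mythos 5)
Your proposal is correct, and it follows the same overall strategy as the paper: the identical trial state $\psi_k$, the min–max principle, and the normalization identity for $b_k$. All three structural facts (i)--(iii) that you take care to verify are indeed what make the energy computation work, and the paper simply asserts the resulting bound; your explicit verification that the two cosine bumps are Dirichlet--Neumann ground states with Rayleigh quotients $\lambda_0(V_{k+r_{\min}},\infty\delta_0)$ and $\lambda_0(V_{k-r_{\max}},\infty\delta_0)$, and that they vanish on all of $J$, fills in exactly the omitted details. The one genuine difference lies in how the lower bound on $b_k$ is obtained. The paper does not solve the normalization equation in closed form; instead it first shows $b_k \to 0$ by comparing the resulting upper bound on $\lambda_0(V_k,\boldsymbol\alpha)$ against the independent lower bound of Theorem~\ref{Theorem lower bound} (giving $b_k \lesssim a_{k,1}+a_{k,2}$), and only then extracts $b_k \ge b_k^{1/2}\,\widehat\alpha^{-1/2}c_k\langle u_k,\mathds 1\rangle$ from the normalization identity. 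You instead solve the quadratic explicitly, $b_k = 4P^2/\bigl((1-Q)^2+4P^2\bigr)$ with $P=\widehat\alpha^{-1/2}c_k\langle u_k,\mathds 1\rangle$ and $Q=\widehat\alpha^{-1}c_k^2(2k+1)$, and read off $b_k \ge 2P^2$ once $P,Q$ are small. This buys two things: your argument for \eqref{inequality lower bound bk} is self-contained (no appeal to Theorem~\ref{Theorem lower bound}), and it makes visible the condition $Q<1$ needed for a normalizing $b_k\in(0,1)$ to exist at all --- a point the paper's ``can be solved explicitly'' glosses over, and which shows the statement's claim ``for all $k\in\mathds N$'' should really be read as ``for all but finitely many $k$'' when $\widehat\alpha$ is small. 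Both routes then close with the same quantitative inputs, $c_k^2\sim k^{-2}$ and $\langle u_k,\mathds 1\rangle\gtrsim k^{1/2}$, the latter by the Riemann-sum (or closed-form cosine-sum) estimate you describe.
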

\begin{proof}
We set $\widehat c_k \coloneq \widehat \alpha^{-1/2} c_k$ and write
\begin{equation*}
 \psi_k(j)= \left( 1- b_k \right)^{1/2} \left( \widetildesinels(j) + \widetildesiners(j) \right) + b^{1/2}_k\widehat c_k \ , \quad j \in V_k \ , \quad k \in \mathds N \ .
\end{equation*}
Calculating the norm of $\psi_k$ gives the relation
 \begin{align} \label{Definitionbk}
b_k & = 2 \left( 1 - b_k \right)^{1/2} b_k^{1/2} \widehat c_k \sum\limits_{j=-k}^k \left( \widetildesinels(j) + \widetildesiners(j) \right) + |V_k| b_k \widehat c_k^2 \ .
\end{align}
This equation can be solved explicitly for $b_k$, yielding the desired sequence. Recall that $(b_k)_{k \in \mathds N}$ is such that $0 < b_k \le 1$ for all $k \in \mathds N$ and $\|\psi_k\|_{\mathcal{H}_k} = 1$ for all $k \in \mathds N$. 

Employing the minmax-principle using $\psi_k$ as a trial state then implies
\begin{align*}
    \lambda_0(V_k, \boldsymbol \alpha) & \le (1-b_k) \dfrac{1}{2} \Big( \lambdaVkrminleftpart,\infty \delta_0) + \lambdaVkrmaxrightpart,\infty \delta_0) \Big) + b_k c_k^2 \ .
\end{align*}

Lastly, we prove \eqref{inequality lower bound bk}. Note that there exists a constant $c > 0$ such that
\begin{equation} \label{inequality sum widetilde sinelr}
\sum\limits_{j=-k}^k\big(\widetildesinels(j) + \widetildesiners(j)\big) \geq c k^{1/2}
\end{equation}
for all but finitely many $k \in \mathds N$. Let $k \in \mathds N$ be sufficiently large such that \eqref{inequality sum widetilde sinelr} and $c \widehat \alpha^{-1} k^{-1} \le 3/4$ holds but otherwise arbitrary. Suppose that $b_k \ge c \widehat \alpha^{-1} k^{-1}$; then there is nothing left to show in regard to \eqref{inequality lower bound bk}. Thus, suppose $b_k < c \widehat \alpha^{-1} k^{-1} \le 3/4$. With~\eqref{Definitionbk} and~\eqref{inequality sum widetilde sinelr} we conclude $b_k \ge c b_k^{1/2} \widehat c_k k^{1/2}$. Using~\eqref{obere Schranke lambda0inftydelta0} this then implies~\eqref{inequality lower bound bk}.
\end{proof}

We conclude this section by providing upper and lower bounds for the second-lowest eigenvalue $\lambda_1(V_k, \boldsymbol \alpha)$.
\begin{lemma}[Bounds on first excited state energy] \label{Lemma 4 asdasdkk4}
        For all but finitely many $k \in \mathds N$, one has
        \begin{equation*}
    \lambda_0(V_k, \infty \delta_0) \le \lambda_1(V_k, \boldsymbol \alpha) \le \max\big\{ \lambda_0(V_{k+r_{\min}}, \infty \delta_0), \lambda_0(V_{k-r_{\max}}, \infty \delta_0) \big\}  \ .
        \end{equation*}
    \end{lemma}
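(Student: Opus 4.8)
The plan is to establish the two inequalities separately: the lower bound by decoupling the operator at the origin and invoking Cauchy interlacing, and the upper bound by feeding the min--max principle a two-dimensional trial space built from the two boundary blocks.

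For the lower bound I would compress $H_{k,\boldsymbol{\alpha}}$ to the codimension-one subspace $\{f \in \cH_k : f(0)=0\}$ and denote by $\mu_0$ the lowest eigenvalue of this compression. Since removing a single linear constraint can shift the eigenvalue indices by at most one, the Cauchy interlacing theorem gives $\mu_0 \le \lambda_1(V_k,\boldsymbol{\alpha})$. Now, because the potential term $\sum_{j\in J}\alpha_j|f(j)|^2$ is non-negative, for every $f$ in that subspace one has $q_{k,\boldsymbol{\alpha}}[f] \ge q_k[f]$, so the infimum of the compressed form dominates the infimum of the pure Laplacian form on the same subspace, which is precisely $\lambda_0(V_k,\infty\delta_0)$ (the Laplacian with a Dirichlet condition at the zero vertex). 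Hence $\mu_0 \ge \lambda_0(V_k,\infty\delta_0)$, and combining the two steps yields $\lambda_1(V_k,\boldsymbol{\alpha}) \ge \lambda_0(V_k,\infty\delta_0)$; this half in fact holds for every $k$.

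For the upper bound I would apply the min--max principle to the two-dimensional trial space $U = \operatorname{span}\{\widetildesinels,\widetildesiners\}$, where these are the ground states of the Dirichlet--Neumann Laplacians on the left block $\{-k,\dots,r_{\min}\}$ and the right block $\{r_{\max},\dots,k\}$, each carrying a Dirichlet condition at the interior potential endpoint so that it vanishes there. By construction their Rayleigh quotients for $\cL_k$ equal $\lambdaVkrminleftpart,\infty\delta_0)$ and $\lambdaVkrmaxrightpart,\infty\delta_0)$, respectively. Two observations make this work: each trial function vanishes on all of $J\subseteq\{r_{\min},\dots,r_{\max}\}$, so the potential term contributes nothing; and the two functions have disjoint supports separated by the block $\{r_{\min},\dots,r_{\max}\}$, so no single edge carries a gradient of both and the off-diagonal cross terms in $q_{k,\boldsymbol{\alpha}}$ vanish as well. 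Thus $q_{k,\boldsymbol{\alpha}}$ is block-diagonal on $U$, and for any $f = a\,\widetildesinels + b\,\widetildesiners$ the Rayleigh quotient is a convex combination of the two block eigenvalues, hence bounded above by their maximum. The min--max principle then delivers the claimed bound, valid as soon as $k$ is large enough that both blocks are nonempty, i.e.\ for all but finitely many $k\in\NN$.

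The routine parts are the two eigenvalue identifications and the normalization bookkeeping for the trial functions. The step demanding the most care is the decoupling in the upper bound: one must check \emph{simultaneously} that the trial functions vanish on the entire support $J$ of the potential and that their supports are separated by the gap $\{r_{\min},\dots,r_{\max}\}$, so that both the potential term and the Laplacian gradient cross terms genuinely drop out and $q_{k,\boldsymbol{\alpha}}$ restricts to $U$ as a diagonal form; this is exactly what turns the two-dimensional Rayleigh quotient into a clean convex combination of the two block energies. For the lower bound the only subtlety is applying Cauchy interlacing in the correct direction, namely that the lowest compressed eigenvalue bounds $\lambda_1$ from below.
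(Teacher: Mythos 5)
Your proposal is correct, and it earns the "different route" label mainly through the lower bound. The paper's own proof is two lines: for the lower bound it uses monotonicity in the potential, $\lambda_1(V_k,\boldsymbol\alpha) \ge \lambda_1(V_k,0)$, and then the identity \eqref{lambda1alphadelta0 equation} with $\alpha=0$, i.e.\ $\lambda_1(V_k,0)=\lambda_0(V_k,\infty\delta_0)$, which rests on the explicit (symmetry) description of the first excited state of the free path Laplacian; your argument via Cauchy interlacing for the compression to $\{f : f(0)=0\}$ combined with form domination bypasses that identity entirely, is more self-contained, and would work verbatim with the Dirichlet vertex placed at any site, not just at $0$. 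For the upper bound the paper invokes the operator inequality $\mathcal L_k + \sum_{j\in J}\alpha_j\delta_j \le \mathcal L_k + \sum_{j\in J}\infty\,\delta_j$ together with the asserted (not proved) fact that $\lambda_1\bigl(V_k,\sum_{j\in J}\infty\,\delta_j\bigr) \le \max\bigl\{\lambda_0(V_{k+r_{\min}},\infty\delta_0),\, \lambda_0(V_{k-r_{\max}},\infty\delta_0)\bigr\}$ for all but finitely many $k$; your two-dimensional trial space spanned by the zero-extensions of the two outer-block ground states is exactly the variational content of that assertion — those two functions are the eigenfunctions of the decoupled operator attached to its two lowest eigenvalues once $k$ is large enough that the outer-block energies (which are of order $k^{-2}$) drop below the spectra of the finitely many interior blocks of fixed size — so your computation effectively supplies the proof the paper leaves implicit. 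What the paper's phrasing buys is brevity and a clean separation into two standard monotonicity facts; what yours buys is that the decoupling mechanism (vanishing of the trial functions on all of $J$, no edge shared by the two supports, hence a diagonal form on $U$ and orthogonality of the trial functions) is verified explicitly, which is precisely the point where care is needed, and your identification of the correct interlacing direction $\lambda_0(\text{compression}) \le \lambda_1(H_{k,\boldsymbol\alpha})$ is also correct.
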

\begin{proof}
The lower bound is a direct consequence of $\lambda_1(V_k, \boldsymbol \alpha ) \ge \lambda_1(V_k, \boldsymbol 0) = \lambda_0(V_k, \infty \delta_0)$. The upper bound follows from the operator inequality $\mathcal L_k + \sum_{j \in J} \alpha_j \delta_j \le \mathcal L_k + \sum_{j \in J} \infty \delta_j$ in combination with $\lambda_1(V_k, \sum_{j \in J} \infty \delta_j) \le \max\{ \lambda_0(V_{k+r_{\min}}, \infty \delta_0), \lambda_0(V_{k-r_{\max}}, \infty \delta_0) \}$, which holds for all but finitely many $k \in \mathds N$.
    \end{proof}

\section{Main results I: General Case}\label{SectionMainResultsI}
\noindent In this section, we study properties of the sequence
\begin{equation} \label{sequence of interest}
    \left(|V_k|^3 \, \Gamma(V_k, \boldsymbol \alpha)\right)_{k \in \mathds{N}}
\end{equation}
and prove a conjecture regarding its limiting behavior as put forward in \cite{KY} (additional results will be provided in Section~\ref{SectionMainResultsII}). By doing this and by considering potentials of compact support, we also generalize the main result of \cite{KY} -- both -- in terms of convergence speed and applicability to a broader class of potentials: more explicitly, we will show that 
\begin{equation*}
   \lim_{k \rightarrow \infty}\left(|V_k|^{2 + \eta} \cdot \Gamma(V_k, \boldsymbol \alpha)\right)=0
\end{equation*}
for all $0 \leq \eta < 1$, as soon as the external potential is not the zero potential. Here, one should recall that $ \lim_{k \rightarrow \infty}\left(|V_k|^{2} \cdot \Gamma(V_k, \boldsymbol \alpha)\right)=\pi^2$ for $\boldsymbol \alpha= \boldsymbol 0$; compare with \cite{KY}. In other words, as soon as there is a non-vanishing external potential of compact support, the spectral gap converges strictly faster to zero than in the absence of any potential. This might be surprising at first, since the potential is supported on a smaller and smaller fraction of the configuration space in the limit of infinite volume. Consequently, this is a rather remarkable spectral effect. 

We now state the first main result of this paper. 
\begin{theorem}[Asymptotic upper and lower bounds I] \label{Theorem bounds accumulation points general potential}
Consider a Hamiltonian $H_{k,\boldsymbol{\alpha}} = \mathcal L_k + \sum_{j \in J} \alpha_j \delta_j$ with compactly supported external potential and associated spectral gap $ \Gamma(V_k, \boldsymbol \alpha)$. Then, the following holds:
\begin{enumerate}[(i)]
\item\label{Theorem bounds accumulation points general potential part 1} There exists a constant $C=C(\boldsymbol \alpha)> 0$ such that
\begin{equation*}
   |V_k|^3 \, \Gamma(V_k, \boldsymbol \alpha) \leq C\quad \text{for all}\quad k \in \mathds{N}\ . 
\end{equation*}
\item\label{Theorem bounds accumulation points general potential part 2} If $\widehat \alpha=\sum_{j \in J} \alpha_j $ is sufficiently small or if $J = \{0\}$, there exists a constant $c=c(\boldsymbol \alpha)>0$ such that
\begin{equation*}
    |V_k|^3 \, \Gamma(V_k, \boldsymbol \alpha) \geq c\quad \text{for all}\quad k \in \mathds{N}\ . 
\end{equation*}
\end{enumerate}
\end{theorem}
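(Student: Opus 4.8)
The plan is to combine the three preceding estimates — the lower bound of Theorem~\ref{Theorem lower bound}, the upper bound of Theorem~\ref{Theorem upper boundII}, and the two-sided bound of Lemma~\ref{Lemma 4 asdasdkk4} — after first establishing a sharp asymptotic comparison of the ``Dirichlet'' ground-state energies appearing in them. Throughout, write $D_k := \lambda_0(V_k,\infty\delta_0)$, $L_k := \lambda_0(V_{k+r_{\min}},\infty\delta_0)$ and $R_k := \lambda_0(V_{k-r_{\max}},\infty\delta_0)$; all three are $\sim k^{-2}$ by \eqref{obere Schranke lambda0inftydelta0}. The crucial input, which is what ultimately produces the third power of $|V_k|$, is the explicit formula $\lambda_0(V_m,\infty\delta_0)=2(1-\cos(\pi/(2m+1)))$ alluded to after \eqref{obere Schranke lambda0inftydelta0}. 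Expanding it to second order in $1/k$ for indices differing from $k$ by the fixed amounts $r_{\min},r_{\max}$ (independent of $k$) yields
\begin{equation*}
|L_k - R_k| \lesssim k^{-3}, \qquad 0 \le \tfrac12(L_k+R_k) - D_k \lesssim k^{-3},
\end{equation*}
where the sign and magnitude in the second estimate follow from the definite sign of the leading Taylor correction, the quantity $\tfrac12(L_k+R_k)-D_k$ being non-negative for all but finitely many $k$ and vanishing identically when $r_{\min}=r_{\max}=0$, i.e. when $J=\{0\}$.

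For part~(i) I start from $\Gamma(V_k,\boldsymbol\alpha)=\lambda_1(V_k,\boldsymbol\alpha)-\lambda_0(V_k,\boldsymbol\alpha)$, bound $\lambda_1$ from above by $\max\{L_k,R_k\}$ (Lemma~\ref{Lemma 4 asdasdkk4}) and $\lambda_0$ from below by Theorem~\ref{Theorem lower bound}, obtaining
\begin{equation*}
\Gamma(V_k,\boldsymbol\alpha) \le \max\{L_k,R_k\} - \tfrac12(L_k+R_k) + \hatakleftpart L_k + \hatakrightpart R_k = \tfrac12|L_k-R_k| + \hatakleftpart L_k + \hatakrightpart R_k .
\end{equation*}
The first term is $\lesssim k^{-3}$ by the comparison estimate; the cross term is $\lesssim k^{-3}$ as well, since $L_k,R_k\sim k^{-2}$ while $\hatakleftpart+\hatakrightpart\lesssim \widetilde\alpha^{-1}k^{-1}$ by \eqref{UpperBoundAKs}. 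Hence $\Gamma(V_k,\boldsymbol\alpha)\lesssim k^{-3}$, and since $|V_k|^3\sim k^3$ the sequence $|V_k|^3\Gamma(V_k,\boldsymbol\alpha)$ is bounded for all but finitely many $k$; enlarging the constant to absorb the finitely many remaining (finite, positive) values gives the bound for all $k\in\mathds N$.

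For part~(ii) I bound $\lambda_1$ from below by $D_k$ (Lemma~\ref{Lemma 4 asdasdkk4}) and $\lambda_0$ from above by Theorem~\ref{Theorem upper boundII} with $c_k^2=D_k/(2+\epsilon)$, and rearrange:
\begin{equation*}
\Gamma(V_k,\boldsymbol\alpha) \ge \Big(D_k - \tfrac12(L_k+R_k)\Big) + b_k\Big(\tfrac12(L_k+R_k)-\tfrac{D_k}{2+\epsilon}\Big).
\end{equation*}
The first bracket is $\ge -C_J k^{-3}$ for some $C_J\ge0$ depending only on $J$ (and is exactly $0$ when $J=\{0\}$), while the second bracket is $\ge c' k^{-2}$ for some $c'>0$, since $L_k,R_k,D_k$ share the leading term $\tfrac{\pi^2}{4}k^{-2}$ and $1-\tfrac1{2+\epsilon}>0$. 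Inserting $b_k\ge c_1\widehat\alpha^{-1}k^{-1}$ from \eqref{inequality lower bound bk} gives $\Gamma(V_k,\boldsymbol\alpha)\ge(c_1 c'\,\widehat\alpha^{-1}-C_J)k^{-3}$. When $J=\{0\}$ we have $C_J=0$, so the right-hand side is positive for every admissible $\widehat\alpha$; for general compactly supported $\boldsymbol\alpha$ it is positive as soon as $\widehat\alpha<c_1 c'/C_J$, i.e. for $\widehat\alpha$ sufficiently small. Multiplying by $|V_k|^3\sim k^3$ and adjusting the constant for the finitely many small $k$ completes part~(ii).

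The main obstacle is the comparison estimate of the first paragraph: it does not suffice to know $\lambda_0(V_m,\infty\delta_0)\sim m^{-2}$, one needs the precise $O(m^{-3})$ behaviour of its finite differences over $O(1)$ shifts of $m$, together with the non-negativity and exact vanishing (for $J=\{0\}$) of $\tfrac12(L_k+R_k)-D_k$. This latter term is exactly what forces the restriction to small $\widehat\alpha$ in the general case: its size is governed by $r_{\max}-r_{\min}$ and is independent of the coupling strength, whereas the compensating gain scales like $b_k\sim\widehat\alpha^{-1}k^{-1}$ and therefore only dominates once $\widehat\alpha$ is small enough.
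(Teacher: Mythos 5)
Your proof is correct and follows essentially the same route as the paper: both parts combine the lower bound of Theorem~\ref{Theorem lower bound} with Lemma~\ref{Lemma 4 asdasdkk4} for the upper bound, and Theorem~\ref{Theorem upper boundII} (via $b_k \gtrsim \widehat\alpha^{-1}k^{-1}$) with Lemma~\ref{Lemma 4 asdasdkk4} for the lower bound, using the explicit formula $\lambda_0(V_m,\infty\delta_0)=2-2\cos(\pi/|V_m|)$ to get the $O(k^{-3})$ control of the differences between the shifted Dirichlet energies. Your write-up is merely slightly more explicit than the paper's about the sign and size of $\tfrac12(L_k+R_k)-D_k$ and about the resulting threshold on $\widehat\alpha$ in part~(ii).
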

\begin{proof}
 To simplify notation, we set $\Theta_{\max}:=\max\big\{ \lambda_0(V_{k+r_{\min}}, \infty \delta_0),  \lambda_0(V_{k-r_{\max}}, \infty \delta_0) \big\}$ and $\Theta_{\min}:=\min\big\{ \lambda_0(V_{k+r_{\min}}, \infty \delta_0),  \lambda_0(V_{k-r_{\max}}, \infty \delta_0) \big\}$.
 
 We start with \eqref{Theorem bounds accumulation points general potential part 1}: Theorem~\ref{Theorem lower bound} and Lemma~\ref{Lemma 4 asdasdkk4} immediately imply for all but finitely many $k \in \mathds N$,
\begin{equation} \label{equation upper bound gap}
    |V_k|^3 \, \Gamma(V_k, \boldsymbol \alpha) \le (const.) \widetilde \alpha^{-1} k^{-1} \Theta_{\max} |V_k|^3 + \left(\Theta_{\max}- \Theta_{\min}\right) |V_k|^3 \ .
\end{equation}
Now, taking into account that $\Theta_{\max} \sim k^{-2}$ and $\Theta_{\max}- \Theta_{\min} \lesssim k^{-3}$ completes the proof; recall here \eqref{Gleichung lambda0}.

We now turn to \eqref{Theorem bounds accumulation points general potential part 2}: Theorem~\ref{Theorem upper boundII} and Lemma~\ref{Lemma 4 asdasdkk4} imply for all but finitely many $k \in \mathds N$,
\begin{equation} \label{equation lower bound gap}
     |V_k|^3 \, \Gamma(V_k, \boldsymbol \alpha)  
    \ge (const.) \widehat \alpha^{-1} \lambda_0(V_k, \infty \delta_0) k^{-1} |V_k|^3 + \left( \lambda_0(V_k, \infty \delta_0) - \Theta_{\max}\right) |V_k|^3 \ .
\end{equation}
By direct calculation one has $\Theta_{\max}-\lambda_0(V_k, \infty \delta_0) \lesssim k^{-3}$. Note that the last term is negative whenever $J \neq \{0\}$ and it vanishes if and only if $J = \{0\}$. Therefore, taking \eqref{obere Schranke lambda0inftydelta0} into account, we see that $\widehat \alpha$ has to be sufficiently small for $J \neq \{0\}$ in order to obtain a non-trivial lower bound. \eqref{obere Schranke lambda0inftydelta0} also implies the statement. 
\end{proof}

\begin{remark}
We now offer a more detailed analysis and interpretation of the upper and lower bounds obtained in Theorem~\ref{Theorem bounds accumulation points general potential} and its proof.
\begin{enumerate}[(i)]
\item Regarding the upper bound~\eqref{equation upper bound gap}: Note that the first term is of the form $(const.) \widetilde \alpha^{-1}$ and consequently converges to zero when the potential strength converges to infinity in the sense that $\widetilde \alpha \to \infty$. However, the last term is independent of the potential strength and is non-zero if and only if $r_{\max} \neq - r_{\min}$. Therefore, the entire upper bound is of the form $(const.) \widetilde \alpha^{-1}$ and consequently converges to zero as $\widetilde \alpha$ converges to infinity if and only if $r_{\max} = - r_{\min}$.

\item Regarding the lower bound~\eqref{equation lower bound gap}: The last term is negative whenever $J \neq \{0\}$ and it vanishes if and only if $J = \{0\}$; note that, trivially,
$\Gamma(V_k, \boldsymbol \alpha) \ge 0$ for all $k \in \mathds N$. Therefore, this lower bound is non-trivial if and only if $\widehat \alpha$ is sufficiently small or if $J= \{0\}$. Furthermore, the first term of this bound is of the form $(const.) \widehat \alpha^{-1}$. Consequently, if and only if $J = \{0\}$, the entire lower bound is of the form $(const.)\widehat \alpha^{-1}$ and converges to zero as the strength of the potential tends to infinity in the sense that $\widehat \alpha \to \infty$. Note that the lower bound converges to $\infty$ as $\hat \alpha\to 0$. This refers to the case where we approach the free Laplacian, for which the spectral gap vanishes in lower order.

\item The reason we end up with a trivial lower bound when $J \neq \{0\}$ and $\widehat \alpha$ is too large is the lack of a sufficiently good lower bound for $\lambda_1(V_k, \boldsymbol \alpha)$ in terms of $\lambda_0(V_{k + r_{\min}}, \infty \delta_0)$ and/or $\lambda_0(V_{k - r_{\max}}, \infty \delta_0)$; compare with Lemma~\ref{Lemma 4 asdasdkk4}. Indeed, also from the physical point of view outlined in Remark~\ref{RemarkPechmannPhysik}, it is reasonable to expect that the lower bound for $\lambda_1(V_k, \boldsymbol \alpha)$ in Lemma~\ref{Lemma 4 asdasdkk4} is suboptimal for strong external potentials. Nevertheless, a corresponding statement as in $(ii)$ of Theorem~\ref{Theorem bounds accumulation points general potential} is expected for all potential strengths. 
\end{enumerate}
\end{remark}

Next, for the convenience of the reader, we summarize our findings from Theorem~\ref{Theorem bounds accumulation points general potential} in the special case $J = \{0\}$, that is, when the potential is localized on the zero vertex in the middle of the path graph. In addition, we provide upper and lower bounds for the limit that are explicit in the coupling strength. This special case was discussed in \cite{KY} and originally motivated this paper, and Corollary~\ref{C_1 proof conjecture special case J = 0} proves the conjecture that was put forward in \cite[Section~4]{KY}.

\begin{Corollary}[Asymptotic upper and lower bounds II] \label{C_1 proof conjecture special case J = 0}
Consider the Hamiltonian $H_{k,\alpha} = \mathcal L_k + \alpha \delta_0$ with arbitrary $\alpha > 0$ and associated spectral gap $\Gamma(V_k, \alpha)$. Then there exist constants $c,C > 0$ independent of $k$ and $\alpha$ such that for all but finitely many $k \in \mathds N$,
\begin{equation*}
    \dfrac{c}{\alpha} \le |V_k|^3 \, \Gamma(V_k, \alpha) \le \dfrac{C}{\alpha} \ .
\end{equation*}
\end{Corollary}
\begin{proof}
 
At first, with Theorem~\ref{Theorem lower bound} and \eqref{lambda1alphadelta0 equation} we conclude for all but finitely many $k \in \mathds N$,
\begin{equation*}
   |V_k|^3 \, \Gamma(V_k, \alpha) \le (const.) \alpha^{-1} k^{-1} \lambda_0(V_k, \infty \delta_0)|V_k|^3 \ .
\end{equation*}
Secondly, Theorem~\ref{Theorem upper boundII} and again \eqref{lambda1alphadelta0 equation} imply
\begin{equation*}
    |V_k|^3 \, \Gamma(V_k, \alpha) \ge (const.) \alpha^{-1} k^{-1} \lambda_0(V_k, \infty \delta_0)|V_k|^3
\end{equation*}
for all but finitely many $k \in \mathds N$. In a final step, recall \eqref{obere Schranke lambda0inftydelta0}.

\end{proof}

Let us remark that, in the continuous one-dimensional setting working with operators on an interval, a result similar to Corollary~\ref{C_1 proof conjecture special case J = 0} was established in \cite{KernerTaufer} for the special case of a symmetric step potential. On the other hand, in \cite{KernerCompact}, the authors could prove corresponding lower bounds for a larger class of symmetric and compactly supported potentials but those bounds are quartic in the length and do not, as conjectured in \cite{KernerTaufer}, involve the length to the power three.

\section{Main results II: Convergence in the Special Case $J = \{0\}$}\label{SectionMainResultsII}

\noindent We conclude this paper with a convergence result for the case $J = \{0\}$, that is, when
 \begin{equation*}
        H_{k,\alpha}:=\mathcal{L}_k + \alpha \delta_0 \ , \quad k \in \mathds N \ ,
    \end{equation*} 
with arbitrary $\alpha > 0$. Recall that we denote the ground state of $H_{k,\alpha}$ by $\varphi_{k, \alpha} \in \mathcal{H}_k$, and its two lowest eigenvalues by $\lambda_0(V_k, \alpha \delta_0)$ and $\lambda_1(V_k, \alpha \delta_0)$, respectively. Consequently, the corresponding spectral gap reads $\Gamma(V_k,\alpha) = \lambda_1(V_k, \alpha \delta_0) - \lambda_0(V_k, \alpha \delta_0)$. Using auxiliary results established in the appendix, we in particular show convergence of the sequence~\eqref{sequence of interest} in this special case. Note that this result provides us with a strengthening of Theorem~\ref{Theorem bounds accumulation points general potential} and Corollary~\ref{C_1 proof conjecture special case J = 0} in this setting. We also remark that this convergence was suggested in \cite[Section~4]{KY} based on numerical simulations.

Before stating in Lemma~\ref{Lemma upper bound} a version of the upper bound on the ground-state energy that is useful for the setting discussed in this section, and subsequently our convergence result in Theorem~\ref{Theorem convergence}, let us recall the definitions of $\widetildesinels(j)$ and $\widetildesiners(j)$ from \eqref{Definition cossinus tilde functionen}, where we now have $r_{\max} = r_{\min} = 0$. Thus in particular,  
\begin{equation}\label{equation phi infty Hauptteil}
\widetilde \cos_{k,0}^{\text{l}}(j) + \widetilde \cos_{k,0}^{\text{r}}(j)
= \left(2/|V_k|\right)^{1/2} \cos \left( (k-|j|+ \tfrac{1}{2}) \pi /|V_k| \right) =  \varphi_{k,\infty}(j)
\end{equation}
for $j \in V_k$ and all $k \in \mathds N$, see also \eqref{equation phi infty} in the appendix. In addition, we write
\begin{equation*}
a_k \coloneq a_{k,1} + a_{k,2} \ ,
\end{equation*}
with $a_{k,1}$ and $a_{k,2}$ from \eqref{definition ak1} and \eqref{definition ak2}, respectively.

\begin{lemma}[Upper bound ground state energy II] \label{Lemma upper bound}
Consider a Hamiltonian $H_{k,\alpha}$, $k \in \mathds N$, with arbitrary $\alpha > 0$. Then, for all but finitely many $k \in \mathds{N}$,
\begin{align} \label{inequality upper bound ground state energyI}
    \lambda_0(V_k, \alpha \delta_0) \le (1-\beta_k)   \lambda_0(V_k,\infty \delta_0) + \alpha |\varphi_{k, \alpha}(0)|^2 \ ,
\end{align}
where $\beta_k$ is such that $0 \le \beta_k \le 1$ and 
\begin{equation} \label{trail state Lemma 10}
 \psi_k(j):= \left( 1- \beta_k \right)^{1/2} \varphi_{k,\infty}(j) + \varphi_{k, \alpha}(0) \ , \quad j \in V_k \ , \quad k \in \mathds N \ ,
\end{equation}
is normalized. Furthermore, 
\begin{equation} \label{Inequality betak ak}
\beta_k \le a_{k} \leq  (const.) \alpha^{-1} k^{-1}
\end{equation}
for all but finitely many $k \in \mathds{N}$. 
\end{lemma}
\begin{proof}
Relation~\eqref{inequality upper bound ground state energyI} follows immediately by using the trial state \eqref{trail state Lemma 10} in combination with the minmax-principle. Calculating the norm of $\psi_k$ gives
 \begin{align}\label{Definitionbk0}
\beta_k & = 2 \left( 1 - \beta_k \right)^{1/2} \varphi_{k,\alpha}(0) \sum\limits_{j=-k}^k \varphi_{k,\infty}(j) + (2k+1) |\varphi_{k, \alpha}(0)|^2
\end{align}
for each $k \in \mathds N$. Since the last term on the right-hand side converges to zero as $k \rightarrow \infty$ by \eqref{upper bound phikalpha}, the existence of a solution follows by the intermediate-value theorem. 
The claimed upper bound on $\beta_k$ follows by comparing \eqref{inequality upper bound ground state energyI} with \eqref{LowerBoundEigenvalue}, and by subsequently taking~\eqref{UpperBoundAKs} into account.
\end{proof}
We now establish the main result of this section.
\begin{theorem}[Convergence result] \label{Theorem convergence}
    Consider the Hamiltonian $H_{k,\alpha}$, $k \in \mathds N$, with arbitrary $\alpha > 0$. Then
    \begin{equation*}
        \lim_{k \rightarrow \infty} |V_k|^3 \, \Gamma(V_k, \alpha) =\frac{8\pi^2}{\alpha}\ .
    \end{equation*}
\end{theorem}
\begin{proof}
Firstly, with Theorem~\ref{Theorem lower bound} and Lemma~\ref{Lemma upper bound}, while also taking into account \eqref{lambda1alphadelta0 equation}, we conclude
    \begin{align*}
    |V_k|^3 \, \Gamma(V_k, \alpha) + \alpha |V_k|^3 |\varphi_{k, \alpha}(0)|^2 - |V_k|^3 a_k \lambda_0(V_k,\infty \delta_0) & \le 0 \\
    \shortintertext{and}
    |V_k|^3 \, \Gamma(V_k, \alpha) + \alpha |V_k|^3 |\varphi_{k, \alpha}(0)|^2 - |V_k|^3 \beta_k \lambda_0(V_k,\infty \delta_0) & \ge 0 \ .
    \end{align*}
  We set
    \begin{align} \label{definition ck}
        c_k \coloneq |V_k|^3 \, \Gamma(V_k, \alpha) + \alpha |V_k|^3 |\varphi_{k, \alpha}(0)|^2
    \end{align}
    for all but finitely many $k \in \mathds N$. Due to Corollary~\ref{C_1 proof conjecture special case J = 0} we already know that $c_k > 0$ for all $k \in \mathds N$. Furthermore, we conclude
    \begin{align} \label{inequality for ck}
        |V_k|^3 \lambda_0(V_k, \infty \delta_0) \beta_k \le c_k \le |V_k|^3 \lambda_0(V_k, \infty \delta_0) a_k
    \end{align}
    for all but finitely many $k \in \mathds N$. 

Next, using \eqref{obere Schranke lambda0inftydelta0} we show that $\lim_{k \to \infty} k(a_k - \beta_k)=0$: Let $k \in \mathds N$ be sufficiently large but otherwise arbitrary. Recall that $a_k \ge \beta_k$, see~\eqref{Inequality betak ak}. For $J = \{ 0 \}$, 
\eqref{equality for ak} reads 
\begin{align*}
a_k & = |\varphi_{k, \alpha}(0)|^2  + 2 \varphi_{k, \alpha}(0) \sum\limits_{j=-k}^{-1} \varphi_{k, \alpha}(j) + 2 \varphi_{k, \alpha}(0) \sum\limits_{j=1}^k \varphi_{k, \alpha}(j) - 2 |\varphi_{k, \alpha}(0)|^2 k \\
& \le 2 \varphi_{k, \alpha}(0) \sum\limits_{j=-k}^{k} \varphi_{k, \alpha}(j) \ ,
\end{align*}
and with \eqref{Definitionbk0} we have
\begin{align*}
\beta_k & \ge 2 \left( 1 - \beta_k \right) \varphi_{k,\alpha}(0) \sum\limits_{j=-k}^k \varphi_{k,\infty}(j)  \ .
\end{align*}
Thus, we conclude
\begin{equation*}
0 \le k(a_k - \beta_k) \leq 2\varphi_{k, \alpha}(0)k\left(\sum_{j=-k}^{k}\varphi_{k, \alpha}(j)-\sum_{j=-k}^{k} \varphi_{k,\infty}(j) \right) + 2 k \beta_k  \varphi_{k,\alpha}(0) \sum\limits_{j=-k}^k \varphi_{k,\infty}(j) \ .
\end{equation*}
Using \eqref{Inequality betak ak}, Hölder's inequality, and \eqref{upper bound phikalpha}, one concludes that the second term on the right-hand side of the above inequality converges to zero.
Regarding the first term on the right-hand side of the above inequality, the eigenvalue equation and a straightforward calculation, also using that $\varphi_{k,\alpha}$ and $\varphi_{k,\infty}$ are symmetric about zero, imply
\begin{equation*}\begin{split}
\lambda_0(V_k, \alpha \delta_0)\sum_{j=-k}^{k}\varphi_{k, \alpha}(j)=\sum_{j=-k}^k \left( \mathcal L_k \varphi_{k,\alpha} \right) (j)  + \alpha \varphi_{k,\alpha}(0)=\alpha \varphi_{k, \alpha}(0)
\end{split}
\end{equation*}
as well as 
\begin{equation} \label{equation for varphi infty one}
\lambda_0(V_k,\infty \delta_0)\sum_{j=-k}^{k}\varphi_{k,\infty}(j)=2 \sum_{j=1}^k \left( \mathcal L_k \varphi_{k,\infty} \right) (j) = 2 (\varphi_{k,\infty}(1)- \varphi_{k,\infty}(0)) =2\varphi_{k,\infty}(1)\ .
\end{equation}
%
Hence, we conclude 
\begin{equation*}\begin{split}
& 2\varphi_{k, \alpha}(0)k\left(\sum_{j=-k}^{k}\varphi_{k, \alpha}(j)-\sum_{j=-k}^{k} \varphi_{k,\infty}(j) \right) \\
\le & 2\varphi_{k, \alpha}(0)k \lambda_0^{-1}(V_k,\alpha \delta_0) \lambda^{-1}_0(V_k,\infty \delta_0)  \\
&  \cdot \Big[\lambda_0(V_k,\infty \delta_0)\big(\alpha\varphi_{k, \alpha}(0)-2\varphi_{k,\infty}(1) \big) + 2 \varphi_{k,\infty}(1) \big(\lambda_0(V_k,\infty \delta_0)- \lambda_0(V_k, \alpha \delta_0) \big) \Big]\ . \\
\end{split}
\end{equation*}
Due to Proposition~\ref{Result with phi 1 phi alpha0} of the appendix, the first term in the brackets is bounded from above by zero. Regarding the remaining term, we recall that $\lambda_0(V_k,\infty \delta_0)- \lambda_0(V_k, \alpha \delta_0)=\Gamma(V_k, \alpha)$, and use Theorem~\ref{Theorem bounds accumulation points general potential}, that $\lambda_0(V_k, \alpha \delta_0), \lambda_0(V_k,\infty \delta_0) \sim k^{-2}$, and \eqref{upper bound phikalpha}. Also, note that $\varphi_{k,\infty}(1) \le (const.) k^{-3/2}$ by \eqref{equation for varphi infty one} and Hölder's inequality. Overall, we obtain 
$0 \le k(a_k - \beta_k) \lesssim k^{-1}$. 

This now implies that
\begin{align*}
    \lim\limits_{k \to \infty} \left( c_k - |V_k|^3 \lambda_0(V_k,\infty \delta_0) \beta_k \right) = 0 \ .
\end{align*}
As a next step, we use \eqref{Definitionbk0} to first obtain
\begin{align*}
    & |V_k|^3 \lambda_0(V_k, \infty \delta_0) \beta_k \\
    = & \left(|V_k|^2 \lambda_0(V_k,\infty \delta_0)\right) \left( |V_k| 2 \left( 1 - \beta_k \right)^{1/2} \varphi_{k,\alpha}(0) \sum\limits_{j=-k}^k \varphi_{k,\infty}(j) + |V_k| (2k+1) |\varphi_{k, \alpha}(0)|^2\right) \ .
\end{align*}
The last term in the large parenthesis converges to zero, see  \eqref{upper bound phikalpha}. For the remaining term, we have, with \eqref{obere Schranke lambda0inftydelta0}, \eqref{equation phi infty Hauptteil}, \eqref{Inequality betak ak}, \eqref{equation for varphi infty one}, since $\cos(x) = - \sin(x - \pi/2)$ for all $x \in \mathds R$, and with Taylor expansion of $\sin(x)$ about zero, as well as \eqref{eq:eigfunctionformula}, Lemma~\ref{lem:asykappa}, and Propositions~\ref{Proposition norm vk abschaetzung nach unten} and \ref{Proposition norm vk abschaetzung nach oben} from the appendix,
\begin{align*}
    & 2 \left( 1 - \beta_k \right)^{1/2} |V_k|^3  \varphi_{k,\alpha}(0) \lambda_0(V_k, \infty \delta_0) \sum_{j=-k}^k \varphi_{k,\infty}(j) \\
     = & \, 4 \sqrt{2} \left( 1 - \beta_k \right)^{1/2} |V_k|^{5/2} \left(k^{-1/2}(1+o(1))\right) \cos\left((k + \tfrac{1}{2}) \pi/\kappa_0\right) \cos( (k-1 + \tfrac{1}{2}) \pi / |V_k|)\\
    = & \left( \dfrac{2|V_k|}{k} \right)^{1/2}  \left( 1 - \beta_k \right)^{1/2} |V_k|^3 \Gamma(V_k,\alpha) + o(1)\ .
\end{align*}
Similarly we obtain
    %
\begin{equation*}
    |V_k|^3 |\varphi_{k, \alpha}(0)|^2 = \dfrac{1}{16 \pi^2} \gamma_k\left( |V_k|^3 \, \Gamma(V_k, \alpha) \right)^2 + o(1) \ ,
\end{equation*}
    %
    %
    %
    %
    where $\gamma_k \coloneq |V_k| \|v_{k, \alpha}\|_{\mathcal H_k}^{-2} \left( 1 + o(1) \right)^{-2}$, $k \in \mathds N$, converges to 2, by Propositions~\ref{Proposition norm vk abschaetzung nach unten} and~\ref{Proposition norm vk abschaetzung nach oben}.
    Thus,
    \begin{align*}
        0 & = \lim\limits_{k \to \infty} \left( c_k - |V_k|^3 \lambda_0(V_k,\infty \delta_0) \beta_k \right) \\
        & = \lim\limits_{k \to \infty} \left( \dfrac{\alpha}{16 \pi^2} \gamma_k\left( |V_k|^3 \, \Gamma(V_k, \alpha) \right)^2 -  \left(  \left(  \dfrac{2|V_k|}{k} \right)^{1/2} \left( 1 - \beta_k \right)^{1/2} - 1 \right) |V_k|^3 \Gamma(V_k,\alpha) + o(1) \right) \ .
    \end{align*}

    Lastly,
    we now conclude, also using \eqref{Inequality betak ak},
    \begin{align*}
        \dfrac{\alpha}{8\pi^2} \left( \liminf\limits_{k \to \infty} |V_{k}|^3 \Gamma(V_{k},\alpha) \right)^2 - \liminf\limits_{k \to \infty} |V_{k}|^3 \Gamma(V_{k},\alpha) = 0
    \end{align*}
and therefore, since $\liminf_{k \to \infty} |V_{k}|^3 \Gamma(V_{k},\alpha) > 0$ by Corollary~\ref{C_1 proof conjecture special case J = 0},
\begin{align*}
        \liminf\limits_{k \to \infty} |V_{k}|^3 \Gamma(V_{k},\alpha) = \dfrac{8\pi^2}{\alpha} \ .
    \end{align*}
Similarly, 
%
%
%
%
$\limsup_{k \to \infty} |V_{k}|^3 \Gamma(V_{k},\alpha) = 8\pi^2/\alpha$,
and thus in total
\begin{align*}
    \lim\limits_{k \to \infty} |V_{k}|^3 \Gamma(V_{k},\alpha) = \dfrac{8 \pi^2}{\alpha} \ .
\end{align*}
\end{proof}
Finally, it is interesting to mention that the limit obtained in Theorem~\ref{Theorem convergence} is in agreement with the numerical results presented in \cite{KY}, which were obtained for the cases $\alpha=1$ and $\alpha=10$.

	
	\vspace*{0.5cm}
	
	{\small
		\bibliographystyle{amsalpha}
		\bibliography{Literature}}

\appendix 

\section{Auxiliary results} \label{Appendix}

\noindent In this appendix, we provide auxiliary results that we used to prove Theorem~\ref{Theorem convergence}; the main result of the appendix is Proposition~\ref{Result with phi 1 phi alpha0}. We recall that Theorem~\ref{Theorem convergence} is concerned with the special case where $J=\{0\}$, meaning the external potential is localized at the zero vertex. Therefore, the results of the appendix are also concerned with this special case. Also recall that $\lambda_0(V_k, \alpha \delta_0) > 0$ denotes the lowest eigenvalue of $H_{k,\alpha} = \mathcal L_k + \alpha \delta_0$, $\alpha >0$, and that $|V_k| = 2k+1$. For convenience, we also write $\lambda_0= \lambda_0(V_k, \alpha \delta_0)$ in this appendix and assume $k \in \mathds N$ to be sufficiently large so that $\lambda_0 < 2$.

Let $\kappa_0= \kappa_0(V_k, \alpha) > 2$ be such that 
\begin{equation}\label{eq:relationcos}
    2- 2\cos\left(\pi/\kappa_0\right) = \lambda_0 \ .
\end{equation}
Note that one may interpret $\kappa_0$ as an effective half-period of the cosine, where the $\alpha= \infty$ case corresponds to $\kappa_0(V_k,\infty)= |V_k|$ and the $\alpha = 0$ case corresponds, informally, to $\kappa_0(V_k,0)= \infty$. This behavior is consistent with the properties of the eigenfunctions in these limiting cases. In the next lemma, we discuss the relationship between $\kappa_0$ and $|V_k|$, and also relate $\kappa_0$ with the spectral gap $\Gamma(V_k, \alpha)$.

\begin{lemma}\label{lem:asykappa}
    For all but finitely many $k \in \mathds N$, we have
    \begin{equation}\label{EquationKappa}
        \kappa_0(V_k, \alpha)= |V_k| + \frac{1}{2\pi^2} |V_k|^{3} \Gamma(V_k, \alpha) + \mathcal O\left( |V_k|^{-1} \right) \ .
    \end{equation}
\end{lemma}
\begin{proof}Th
    Starting with \eqref{eq:relationcos} and using the equation $\cos(2x) = 1 - 2 \sin^2(x)$ we conclude 
    \begin{equation*}
         \kappa_0(V_k,\alpha) = \dfrac{\pi/2}{\arcsin\left( \sqrt{\lambda_0}/2 \right)}
    \end{equation*}
   for all but finitely many $k \in \mathds N$.
   In a next step we expand the $\arcsin$ about zero, as well as use  that $\lambda_0(V_k, \alpha \delta_0) = \lambda_1(V_k,\alpha \delta_0)  - \Gamma(V_k,\alpha)$ and $\lambda_1(V_k,\alpha \delta_0)  = \pi^2 |V_k|^{-2} + \mathcal O(|V_k|^{-4})$, see~\eqref{lambda1alphadelta0 equation} and~\eqref{obere Schranke lambda0inftydelta0}, to obtain
    \begin{align*}
        \kappa_0(V_k,\alpha) 
        &= \frac{|V_k|}{\sqrt{1- \pi^{-2} |V_k|^{2} \Gamma(V_k, \alpha) + \mathcal O(|V_k|^{-2})} + \mathcal O \left( |V_k|^{-2} \right)} \ .
    \end{align*}
    Finally, expanding $\sqrt{1-x}$ and of $(1-x)^{-1}$ about $x=0$, we conclude 
    \begin{align*}
        \kappa_0(V_k,\alpha) = |V_k| + (2\pi^2)^{-1} |V_k|^3 \Gamma(V_k, \alpha) + \mathcal O\left( |V_k|^{-1} \right) \ ,
    \end{align*}
    for all but finitely many $k \in \mathds N$.
\end{proof}
An immediate consequence of Corollary~\ref{C_1 proof conjecture special case J = 0} and Lemma~\ref{lem:asykappa} is now the following result.
\begin{cor}\label{cor:asykappa}
    There exist $C_1 = C_1(\alpha), C_2=C_2(\alpha) >0$ such that for all but finitely many $k \in \mathds N$,
    \begin{align*}
         C_1 \le \kappa_0(V_k, \alpha) - |V_k| \le   C_2 \ .
    \end{align*}
\end{cor}
Next, we introduce the auxiliary function
\begin{gather*}
    u_{k,\alpha}(j) \coloneq \cos\left((j+\tfrac{1}{2}) \pi/\kappa_0\right) \ , \quad j \in \mathds Z \ .
\end{gather*}
For all $j\in \mathds Z$, using the equation $\cos(x+y) = \cos(x)\cos(y) - \sin(x)\sin(y)$, we then have
\begin{align*}
    u_{k,\alpha}(j+1) &= \cos\left( (j+\tfrac{1}{2}) \pi/\kappa_0\right)\cos \left( \pi/\kappa_0 \right) - \sin\left((j + \tfrac{1}{2}) \pi/\kappa_0 \right)  \sin\left( \pi/\kappa_0\right)\ ,\\
\shortintertext{and}
    u_{k,\alpha}(j-1) &= \cos\left( ( j + \tfrac{1}{2})  \pi/\kappa_0\right) \cos\left( \pi/\kappa_0\right) + \sin\left((j + \tfrac{1}{2}) \pi/\kappa_0 \right)  \sin\left( \pi/\kappa_0\right) \ ,
\end{align*}
and therefore
\begin{equation}\label{eigenfunction constraction equation part 1}
    - u_{k,\alpha}(j-1) + 2 u_{k,\alpha}(j) - u_{k,\alpha}(j+1) = \left(2 - 2 \cos \left( \pi/\kappa_0 \right)\right) u_{k,\alpha}(j) = \lambda_0  u_{k,\alpha}(j) \ .
\end{equation}
Since $u_{k,\alpha}(-1)=u_{k,\alpha}(0)$ (by direct computation) and by using the above equation for $j=0$, we conclude
\begin{equation}\label{eigenfunction constraction equation part 2}
    u_{k,\alpha}(0) - u_{k,\alpha}(1) = -u_{k,\alpha}(-1) + 2u_{k,\alpha}(0) - u_{k,\alpha}(1) = \lambda_0 u_{k,\alpha}(0) \ .
\end{equation}
Now, choose $A(j)$ such that $ \varphi_{k, \alpha}(j) = A(j) v_{k,\alpha}(j)$ where $\varphi_{k, \alpha}$ is the eigenfunction to $\lambda_0$ and $v_{k,\alpha}(j):=u_{k,\alpha}(k-|j|)$. Then, using the eigenvalue equation for $\varphi_{k, \alpha}$ as well as~\eqref{eigenfunction constraction equation part 1} and~\eqref{eigenfunction constraction equation part 2}, one obtains that $A(j) = A(j-1)$ for $j\in \{1,\ldots, k\}$ and hence, also using that $v_{k,\alpha}$ is symmetric about zero, $A(j)=\|v_{k,\alpha}\|_{\mathcal H_k}^{-1}$, $j \in V_k$. Furthermore, from this one concludes that the (unique) eigenfunction of $H_{k,\alpha}$ to $\lambda_0$ is given by
\begin{equation}\label{eq:eigfunctionformula}
    \varphi_{k, \alpha}(j) = \|v_{k,\alpha}\|_{\mathcal H_k}^{-1} \, v_{k,\alpha}(j) \qquad \text{where} \qquad  v_{k,\alpha}(j) = \cos\left((k-|j|+\tfrac{1}{2}) \pi/\kappa_0\right) \ .
\end{equation}
\begin{proposition}\label{prop:diffalpha}
We have for all but finitely many $k \in \mathds N$,
\begin{gather*}
v_{k,\alpha}(1) - v_{k,\alpha}(0) = \frac{\pi}{|V_k|} - \frac{1}{2\pi} |V_k| \Gamma(V_k, \alpha) + \mathcal O(|V_k|^{-3}).
\end{gather*}
\end{proposition}
\begin{proof}
    Using Lemma~\ref{lem:asykappa} and Corollary~\ref{cor:asykappa}, which implies that $|\kappa_0 - 2k|$ is bounded from above by a constant, we obtain
    \begin{align*}
        v_{k,\alpha}(1) - v_{k,\alpha}(0) &= \cos\left( (k-\tfrac{1}{2}) \pi/\kappa_0 \right) - \cos\left( (k+\tfrac{1}{2}) \pi/\kappa_0 \right) = 2 \sin\left( \frac{\pi k}{\kappa_0}\right) \sin\left( \frac{\pi}{2\kappa_0}  \right)\\
        &= 2 \cos \left( \frac{\kappa_0 - 2k}{\kappa_0} \frac{\pi}{2} \right) \sin \left( \frac{\pi}{2\kappa_0} \right) = 2 \Big( 1 +\mathcal O\left(|V_k|^{-2}\right) \Big) \left(  \dfrac{\pi}{2\kappa_0} + \mathcal O\left(|V_k|^{-3}\right)\right) \\
        & = \dfrac{\pi}{\kappa_0}  +\mathcal O\left(|V_k|^{-3} \right) \ .
    \end{align*}
    Since $|V_k|^2\Gamma(V_k,\alpha) \sim k^{-1}$ by Corollary~\ref{C_1 proof conjecture special case J = 0}, we thus have 
    \begin{align*}
        v_{k,\alpha}(1) - v_{k,\alpha}(0) &= \frac{\pi}{|V_k|} \left( 1 - \frac{1}{2\pi^2} |V_k|^2 \Gamma(V_k, \alpha) +\mathcal O\left (|V_k|^{-2} \right) \right) + \mathcal O \left( |V_k|^{-3} \right)\ ,
    \end{align*}
    for all but finitely many $k \in \mathds N$.
\end{proof}

An immediate consequence of Proposition~\ref{prop:diffalpha} is the following inequality.
\begin{Corollary}\label{cor:diffalpha}
There exists a $C_3 = C_3(\alpha)>0$ such that for all but finitely many $k \in \mathds N$,
\begin{gather*}
    \frac{\pi}{|V_k|}- \left( v_{k,\alpha}(1) - v_{k,\alpha}(0) \right) \ge   \frac{C_3}{|V_k|^2} \ .
\end{gather*}
\end{Corollary}
We also have the following statement. 
\begin{proposition} \label{Proposition norm vk abschaetzung nach unten}  For all but finitely many $k \in \mathds N$, we have
    \begin{equation}
        \|v_{k,\alpha}\|_{\mathcal H_k}^2 \ge \dfrac{|V_k|}{2} \ .
    \end{equation}
\end{proposition}
\begin{proof}
        By Corollary~\ref{cor:asykappa} we have $\kappa_0 \ge |V_k|$ for all but finitely many $k \in \mathds N$.
        Thus, using $\cos^2(x) = (1/2)(1 + \cos(2x))$, we conclude
        \begin{align*}
            \| v_{k,\alpha} \|_{\mathcal H_k}^2 &= \sum_{j=-k}^k \cos^2 \left ( (k- |j| + \tfrac{1}{2}) \pi /\kappa_0 \right ) \ge \sum_{j=-k}^k \cos^2((k-|j|+ \tfrac{1}{2}) \pi /|V_k|)\\
            &= \dfrac{|V_k|}{2} + \dfrac{1}{2} \sum_{j=-k}^k \cos\left( \dfrac{|V_k|-2|j|}{|V_k|} \pi \right) = \dfrac{|V_k|}{2} - \dfrac{1}{2} \sum_{j=-k}^k \cos\left ( \frac{2\pi j}{|V_k|}\right )\\
            &= \dfrac{|V_k|}{2} - \dfrac{1}{2}  \mathrm{Re} \sum_{j=-k}^k \exp\left( 2\pi \mathrm i \frac{j}{|V_k|}\right) = \frac{|V_k|}{2}
        \end{align*}
    for all but finitely many $k \in \mathds N$.
\end{proof}
Proposition~\ref{Proposition norm vk abschaetzung nach unten} is complemented by the following result.
\begin{proposition} \label{Proposition norm vk abschaetzung nach oben}
    For all but finitely many $k\in \mathbb N$ we have
    \begin{equation*}
        \|v_{k,\alpha}\|_{\mathcal H_k}^2 \le \dfrac{|V_k|}{2} + \lceil C_2/2 \rceil \ ,
    \end{equation*}
    with $C_2 > 0$ from Corollary~\ref{cor:asykappa}.
\end{proposition}
\begin{proof}
    By Corollary~\ref{cor:asykappa} we have $\kappa_0 \le |V_k| + C_2$, and similarly as in Proposition~\ref{Proposition norm vk abschaetzung nach unten} we have, for all but finitely many $k\in \mathds N$,
    \begin{align*}
            \| v_{k,\alpha} \|_{\mathcal H_k}^2 &= \sum_{j=-k}^k \cos^2 \Big( (k- |j| + \tfrac{1}{2}) \pi /\kappa_0 \Big) \\
            &\le \sum_{j=-(k+ \lceil C_2/2\rceil)}^{k+\lceil C_2/2\rceil } \cos^2 \Big( \big( k+ \lceil C_2/2\rceil- |j|+ \tfrac{1}{2} \big) \pi/ \big( 2(k+ \lceil C_2/2\rceil)+1 \big) \Big)\\
            & = k+ \lceil C_2/2 \rceil + 1/2 \ .
        \end{align*}
\end{proof}

Next, we define the function $v_{k,\infty}$ via
\begin{gather*}
    v_{k,\infty}(j) = \cos( (k-|j| + \tfrac{1}{2}) \pi / |V_k|) \ ,
\end{gather*}
and conclude $\varphi_{k,\infty}(j) = v_{k,\infty}(j)/ \|v_{k,\infty}\|_{\mathcal H_k}$ (recall that $\varphi_{k,\infty}$ is the ground-state eigenfunction to $H_{k,\alpha}$ for $\alpha=\infty$). As computed in the proof of Proposition~\ref{Proposition norm vk abschaetzung nach oben},
\begin{gather*}
    \|v_{k,\infty}\|_{\mathcal H_k}^2 = \sum_{j=-k}^k \cos^2 \left( (k-|j|+ \tfrac{1}{2}) \pi/|V_k| \right) = \frac{|V_k|}{2} \ .
\end{gather*}
Hence, we have
\begin{gather} \label{equation phi infty}
     \varphi_{k,\infty}(j) = \left(2/|V_k|\right)^{1/2} \cos \left( (k-|j|+ \tfrac{1}{2}) \pi /|V_k| \right) \ .
\end{gather}
We now establish the main result of the appendix.
\begin{proposition} \label{Result with phi 1 phi alpha0}
    There exists a constant $C_4 = C_4(\alpha)>0$ such that for all but finitely many $k \in \mathds N$, we have
    \begin{gather*}
       2  \varphi_{k,\infty}(1)-\alpha \varphi_{k, \alpha}(0) \ge C_4 |V_k|^{-5/2} \ .
    \end{gather*}
\end{proposition}
\begin{proof}
    Using Corollary~\ref{cor:diffalpha} and Proposition~\ref{Proposition norm vk abschaetzung nach unten} we obtain, for all but finitely many $k$,
    \begin{align*}
        \varphi_{k, \alpha}(1) - \varphi_{k, \alpha}(0) &= \frac{v_{k,\alpha}(1) - v_{k,\alpha}(0)}{\|v_{k,\alpha}\|_{\mathcal H_k}} \le \frac{2^{1/2}}{|V_k|^{1/2}} \left( \frac{\pi}{|V_k|}  -\frac{C_3}{|V_k|^2} \right) = \frac{2^{1/2}\pi}{|V_k|^{3/2}}  \left(1- \frac{C_3}{\pi|V_k|} \right) \ .
    \end{align*}
Using \eqref{equation phi infty} then yields
    \begin{align*}
        \varphi_{k, \alpha}(1) - \varphi_{k, \alpha}(0) &\leq \varphi_{k,\infty}(1)- 2^{1/2} C_3 |V_k|^{-5/2} + \mathcal O \left( |V_k|^{-7/2} \right) \ .
    \end{align*}
    Since $\lambda_0 \, \varphi_{k, \alpha}(0) \lesssim |V_k|^{-7/2}$, see also \eqref{upper bound phikalpha}, there exists a constant $C_4>0$ such that for all but finitely many $k \in \mathds N$,
    \begin{gather*}
         \varphi_{k,\infty}(1) - \left( \varphi_{k, \alpha}(1) - \varphi_{k, \alpha}(0) \right) - \frac{\lambda_0}{2} \varphi_{k, \alpha}(0) \ge \frac{C_4}{2}|V_k|^{-5/2} \ .
    \end{gather*}
    Finally, employing the eigenvalue equation for $\varphi_{k, \alpha}$ and the fact that $\varphi_{k, \alpha}$ is symmetric about zero yields
    \begin{gather*}
        2 \varphi_{k,\infty}(1) -\alpha \varphi_{k, \alpha}(0) = 2 \varphi_{k,\infty}(1) - 2( \varphi_{k, \alpha}(1) - \varphi_{k, \alpha}(0)) - \lambda_0 \, \varphi_{k, \alpha}(0) \ge C_4 |V_k|^{-5/2}
    \end{gather*}
    for all but finitely many $k \in \mathds N$.
\end{proof}

\end{document}